\newtheorem{conj}{Conjecture}
\newtheorem{theorem}{Theorem}[section]
\newtheorem{proposition}[theorem]{Proposition}
\newtheorem{lemma}{Lemma}[section]
\newtheorem{obs}[lemma]{Observation}
\theoremstyle{definition}
\newtheorem{remark}[lemma]{Remark}
\numberwithin{equation}{section}
\renewcommand{\le}{\leqslant}
\renewcommand{\ge}{\geqslant}
\renewcommand{\leq}{\leqslant}
\renewcommand{\geq}{\geqslant}
\newcommand{\E}{\mathbb E}
\newcommand{\LL}{\mathcal L }
\newcommand{\NN}{\mathcal N}
\DeclareMathOperator{\cdim}{codim}
\def\qed{\ifvmode\mbox{ }\else\unskip\fi\hskip 1em plus 10fill$\Box$}
\def\Ddots{\mathinner{\mkern1mu\raise\p@
\vbox{\kern7\p@\hbox{.}}\mkern2mu
\raise4\p@\hbox{.}\mkern2mu\raise7\p@\hbox{.}\mkern1mu}}
\renewcommand{\P}{\mathbb{P}}
\def\Z{\mathbb Z}
\def\F{\mathbb F}
\def\P{\mathbb P}
\def\E{\mathbb E}
\title{\vspace{-0.7cm} On off-diagonal Ramsey numbers for vector spaces over $\mathbb{F}_{2}$}
\author{Zach Hunter\thanks{Department of Mathematics, ETH, Z\"urich, Switzerland. Email: {\tt zach.hunter@math.ethz.ch}.} \and Cosmin Pohoata\thanks{Department of Mathematics, Emory University, Atlanta, GA. Email: {\tt cosmin.pohoata@emory.edu}. Research supported by NSF Award DMS-2246659.}}
\date{}
\begin{document}
\maketitle

\begin{abstract}
\smallskip
For every positive integer $d$, we show that there must exist an absolute constant $c > 0$ such that the following holds: for any integer $n \geq cd^{7}$ and any red-blue coloring of the one-dimensional subspaces of $\mathbb{F}_{2}^{n}$, there must exist either a $d$-dimensional subspace for which all of its one-dimensional subspaces get colored red or a $2$-dimensional subspace for which all of its one-dimensional subspaces get colored blue. This answers recent questions of Nelson and Nomoto, and confirms that for any even plane binary matroid $N$, the class of $N$-free, claw-free binary matroids is polynomially $\chi$-bounded. 

Our argument will proceed via a reduction to a well-studied additive combinatorics problem, originally posed by Green: given a set $A \subset \mathbb{F}_{2}^{n}$ with density $\alpha \in [0,1]$, what is the largest subspace that we can find in $A+A$? Our main contribution to the story is a new result for this problem in the regime where $1/\alpha$ is large with respect to $n$, which utilizes ideas from the recent breakthrough paper of Kelley and Meka on sets of integers without three-term arithmetic progressions. 
\end{abstract}

\section{Introduction}

For every two positive integers $s$ and $d$, let $R_{\mathbb{F}_{2}}(s,d)$ denote the smallest integer $n$ such that in every red-blue coloring of all the one-dimensional subspaces of $\mathbb{F}_{2}^{n}$ there must exist either a $d$-dimensional subspace for which all of its one-dimensional subspaces get colored red or an $s$-dimensional subspace for which all of one-dimensional subspaces get colored blue. This quantity was first introduced in 1972 (in much greater generality) by Graham, Leeb and Rothschild \cite{GLR72}, who showed that $R_{\mathbb{F}_{2}}(s,d)$ is finite for every choice of $s$ and $d$. The result was subsequently simplified (and generalized) by Spencer in \cite{spencer}, however the quantitative bounds implied by either of these arguments are all rather poor, due to the repeated uses of the Hales-Jewett Theorem. Despite the rich activity on their combinatorial counterparts over the years, there hasn't been much progress on these Ramsey numbers since.

In \cite{NN21}, Nelson and Nomoto established a new connection between $R_{\mathbb{F}_{2}}(2,d)$ and the $\chi$-boundedness of certain classes of binary matroids, which in some sense reignited interest for this problem. Recall that a class $\mathcal{M}$ of matroids is $\chi$-bounded by a function $f$ if $\chi(M) \leq  f(\omega(M))$ for all $M \in \mathcal{M}$, and that $\mathcal{M}$ is $\chi$-bounded if it is $\chi$-bounded by some $f$. Here $\omega(M)$ denotes the {\it{clique number}} of the matroid $M$ and $\chi(M)$ denotes its {\it{chromatic number}}. In particular, for a simple binary $n$-dimensional matroid $M=(E,G)$ defined on a finite projective geometry $G:=\operatorname{PG}(n-1,2)$ and having ground set $E \subset G$, $\omega(M)$ is defined to be the dimension of the largest subgeometry of $M$ contained in $E$, and $\chi(M)$ is defined as $\chi(M)=n - \omega(M^{c})$, where $M^{c} (G\setminus E,G)$ is the matroid complement of $M$. These parameters represent natural analogues of the usual clique number $\omega(G)$ and the chromatic number $\chi(G)$ of a graph $G$, respectively. See for example \cite{Oxley} for a discussion. A simple binary matroid is called {\it{claw-free}} if none of its rank-$3$ flats are independent sets. Unlike claw-free graphs, however, claw-free (simple binary) matroids are not necessarily $\chi$-bounded \cite{claw}: for all $k \geq 0$, there is a so-called even-plane matroid $M$ with $\omega(M)=2$ and $\chi(M) \geq k$. A simple binary matroid $M$ is called an {\it{even plane matroid $M$}} if all of its 3-dimensional induced restrictions have even-sized ground sets. It is not difficult to see that such matroids are claw-free, and so this implies that the class of claw-free matroids is not $\chi$-bounded. One of the main results in \cite{NN21} is that even-plane matroids present the only obstruction to $\chi$-boundedness.

\begin{theorem}[Nelson-Nomoto]
If $N$ is an even-plane matroid, then the class of $N$-free, claw-free matroids is $\chi$-bounded by the function $f(d) =  R_{\mathbb{F}_{2}}(2,d+1) + d(\dim(N)+4)$. 
\end{theorem}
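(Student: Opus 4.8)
The plan is to argue by contradiction. Fix a claw-free binary matroid $M=(E,G)$ on $G=\operatorname{PG}(n-1,2)$ with no $N$-minor, set $d=\omega(M)$ and $t=\dim(N)$, and suppose $\chi(M)=n-d>R_{\mathbb{F}_{2}}(2,d+1)+d(t+4)$; the aim is to produce an $N$-minor of $M$. Identifying points of $G$ with nonzero vectors of $V=\mathbb{F}_{2}^{n}$, I will call a point \emph{red} if it lies in $E$ and \emph{blue} otherwise, so that claw-freeness says exactly that no plane of $G$ has precisely three red points spanning it, while $\omega(M)=d$ says $E$ contains a rank-$d$ flat but no rank-$(d+1)$ flat. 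First I would fix a rank-$d$ flat $F\subseteq E$ with corresponding subspace $V_{F}\le V$, so that $F=V_{F}\setminus\{0\}$ is all red, and then pass to the contraction $M/F$: since claw-freeness is preserved under contraction, $M/F$ is again claw-free with no $N$-minor, and after simplification it is the restriction $\overline G|_{T}$, where $\overline G=\operatorname{PG}(n-d-1,2)$ is the projective geometry on $V/V_{F}$ and $T\subseteq\overline G$ records which cosets of $V_{F}$ meet $E$; note that $\operatorname{rank}(\overline G)=n-d>R_{\mathbb{F}_{2}}(2,d+1)+d(t+4)$.

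The dividend from contracting $F$ is that the blue points of $M/F$ — the cosets of $V_{F}$ disjoint from $E$ — are tightly constrained. Indeed, if $x+V_{F}$ and $y+V_{F}$ are blue cosets whose sum is a distinct third coset, then for each $f\in F$ the plane $\langle x,y,f\rangle$ of $G$ contains the red point $f$ but neither $x$ nor $y$, so claw-freeness limits which of $x+f,\ y+f,\ x+y,\ x+y+f$ can be red; sweeping over all $f\in F$ and all lines of the blue set of $M/F$, and iterating the resulting restrictions, I expect one can show (this is the technical core) that at the cost of passing to a subflat $H_{0}\le\overline G$ of codimension $O(d)$, the blue set of $M/F$ becomes cap-like on $H_{0}$, carrying no moderately large affine configuration. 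Below I use only that this clean-up costs $O(d)$ rank.

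Next I would run a greedy process, maintaining a flat $H\le\overline G$ (starting from $H_{0}$), a partial copy of a minor of $N$, and the invariant that the portion of $M$ lying over $H$ has $\omega\le d$. So long as this portion of $M$ contains a \emph{blue line}, I would use that blue line together with the red subgeometry $F$ and the cap-like structure above to extend the partial copy by one rank level, consistently with the defining quadratic form of the even-plane matroid $N$ — here exploiting that an even-plane matroid of rank $t$ is, up to projective equivalence, controlled by a single quadratic form on $\mathbb{F}_{2}^{t}$, and that a blue line together with the huge red flat $F$ supplies exactly enough room to pin down one further coordinate of that form; if a red $(d+1)$-flat surfaces instead, the same contraction trick yields a $\operatorname{PG}(d,2)$-minor on which to continue. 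Each round should consume at most $t+4$ units of $\operatorname{rank}(H)$, and at most $d$ rounds occur before the configuration stabilises, for a total loss of at most $d(t+4)$. If the process reaches rank level $t$ we obtain an $N$-minor, contradicting the hypothesis; otherwise it halts with a flat $H^{\star}$ of codimension at most $d(t+4)$ in $\overline G$ on which the relevant part of $M$ is a red/blue colouring of a projective geometry of rank at least $(n-d)-d(t+4)$ with no blue line and no red $(d+1)$-flat. By the very definition of $R_{\mathbb{F}_{2}}(2,d+1)$ this forces $(n-d)-d(t+4)\le R_{\mathbb{F}_{2}}(2,d+1)-1$, contradicting the assumption $n-d>R_{\mathbb{F}_{2}}(2,d+1)+d(t+4)$; hence $\chi(M)\le R_{\mathbb{F}_{2}}(2,d+1)+d(t+4)$.

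The routine parts are the reduction, the choice of $F$, the passage to $M/F$, and the closing appeal to the definition of $R_{\mathbb{F}_{2}}(2,d+1)$. The hard part — and the main obstacle — is twofold: first, upgrading the purely local no-claw condition to the global structural statement that, after discarding only $O(d)$ rank, the blue points of $M/F$ support nothing beyond cap-like configurations; and second, turning the quadratic-form description of the even-plane matroid $N$ into an explicit construction of an $N$-minor from a single blue line and the red subgeometry $F$, with rank cost linear in $\dim(N)$ and $\omega(M)$. The even-plane hypothesis is indispensable precisely in the second part — even-plane matroids are exactly the rank-$3$ local patterns a claw-free matroid can realise — and holding the per-level cost to $O(\dim N)$ is the delicate point.
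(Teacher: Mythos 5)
This theorem is cited by the paper to Nelson and Nomoto \cite{NN21}; the present paper does not prove it, so there is no in-paper argument to compare your proposal against --- the paper's contribution is the bound on $R_{\mathbb{F}_{2}}(2,d)$, which is then plugged into this cited theorem to deduce polynomial $\chi$-boundedness.

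As a stand-alone attempt, you have the overall shape right: contract a maximum red flat, work in the quotient, build an $N$-minor level by level at cost $\dim(N)+4$ per level for at most $d$ levels, and close by invoking the definition of $R_{\mathbb{F}_{2}}(2,d+1)$ once no blue line and no red $(d+1)$-flat remain; the rank accounting does reproduce the stated bound. But the two steps you yourself flag as ``the technical core'' and ``the hard part'' are left entirely unproved: (1) that claw-freeness of $M$ forces the blue set of the contraction $M/F$ to become ``cap-like'' after passing to a subflat of codimension only $O(d)$, and (2) that a single blue line together with the red flat $F$ lets one extend a partial copy of $N$ by one rank level at cost $\dim(N)+4$, using the quadratic-form description of even-plane matroids. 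These are the entire substance of the theorem --- the closing appeal to $R_{\mathbb{F}_{2}}(2,d+1)$ and the rank bookkeeping are routine once (1) and (2) are in hand --- and the proposal contains no argument for either. In particular, the assertion that the blue set supports ``nothing beyond cap-like configurations'' is offered as a hope rather than a lemma, and it is not made precise what ``cap-like'' means, why it would follow from the local no-claw condition and the fact that $F$ is a maximal red flat, or why it would be strong enough to feed the per-level extraction in step (2). As written, this is a plausible plan of attack, not a proof, and it cannot be checked against the actual argument without consulting \cite{NN21}.
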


Moreover, in \cite{NN21} Nelson and Nomoto also showed that $R_{\mathbb{F}_{2}}(2,d) \leq (d+1)2^{d}$, thus improving upon the bounds from \cite{GLR72} and \cite{spencer}, and establishing that the class of $N$-free, claw-free matroids above is $\chi$-bounded by an exponential function. In a very recent paper \cite{FY}, Frederickson and Yepremyan further improved this estimate to $R_{\mathbb{F}_{2}}(2,d) \leq 1.566^d$ (by using some new ideas which also allow one to improve upon a few other particular cases of the Graham-Leeb-Rothschild theorem).

On the other hand, it is well-known that the class of claw-free graphs is $\chi$-bounded by a quadratic function: in a claw-free graph $G$, each vertex has the property that its neighborhood has no independent set of size $3$, so the maximum degree of $G$ is upper bounded by the usual off-diagonal Ramsey number $R(3,\omega(G)+1)$; see for example \cite{CS} and the references therein for more details. In light of this analogy, it thus seems natural to ask whether or not it could also be the case that for any even-plane matroid $N$, the class of $N$-free, claw-free matroids is $\chi$-bounded by a function $f(d)$ that grows only polynomially in $d$ (or in other words, is polynomially $\chi$-bounded). 

In this paper, we settle this question in the affirmative by showing that $R_{\mathbb{F}_{2}}(2,d)$ is polynomial in $d$. Specifically, our main result is the following estimate:

\begin{theorem} \label{main}There exists an absolute constant $C>0$, such that for every integer $d\geq 1$:
$$R_{\mathbb{F}_{2}}(2,d) \leq Cd^{7}.$$
\end{theorem}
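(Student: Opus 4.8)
We separate the argument into a short combinatorial reduction and the additive-combinatorial heart.

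\textbf{The reduction.} Identify the one-dimensional subspaces of $\mathbb{F}_2^n$ with the nonzero vectors, so that a red--blue coloring is a partition $\mathbb{F}_2^n\setminus\{0\}=R\sqcup B$; here a red $d$-dimensional subspace is a $d$-dimensional $V$ with $V\setminus\{0\}\subseteq R$, and a blue $2$-dimensional subspace is a triple $\{x,y,x+y\}\subseteq B$ with $x,y$ nonzero and distinct. Thus, if the coloring has no blue plane, then $B$ is sum-free, i.e. $(B+B)\cap B=\emptyset$, equivalently $B+B\subseteq R\cup\{0\}$; so it suffices to find a $d$-dimensional subspace inside $B+B$, or, when $B$ is very sparse, directly inside $R\cup\{0\}$. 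Write $\beta:=|B|/2^n$. If $\beta>\tfrac12$ then already $B+B=\mathbb{F}_2^n$ (the sets $B$ and $v+B$ meet for every $v$), so we are done trivially; otherwise $\beta\in(0,\tfrac12]$. If $\beta\le 2^{-d}$ we build a red subspace greedily: a $k$-dimensional $V\subseteq R\cup\{0\}$ can be extended by some $v\notin V$ with $(v+V)\cap B=\emptyset$ as long as $|B|\cdot|V|+|V|=\beta\,2^{n+k}+2^k<2^n$, which holds for all $k\le d-1$ when $\beta\le 2^{-d}$ and $n\gg d$. So Theorem~\ref{main} reduces to: for $2^{-d}<\beta\le\tfrac12$ and $n\ge Cd^7$, the sumset $B+B$ contains a $d$-dimensional subspace.

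\textbf{The additive-combinatorial heart.} The remaining task is exactly Green's problem in the sparse regime, and the plan is to prove the following: if $A\subseteq\mathbb{F}_2^n$ has density $\alpha$, then $A+A$ contains a subspace of dimension $\gg n/(\log(2/\alpha))^{O(1)}$. Granting this, when $2^{-d}<\beta\le\tfrac12$ we have $\log(2/\beta)\le d+1$, so $B+B$ contains a subspace of dimension $\gg n/d^{O(1)}\ge d$ once $n\ge Cd^7$ (with $C$ chosen to beat the hidden constants and exponent), completing the reduction. To prove the sumset statement, set $f=\mathbbm 1_A$ and use that $\widehat{f*f}=|\hat f|^2\ge 0$, so $g:=f*f\ge 0$ with $\mathbb E g=\alpha^2$, and we want $g>0$ on a large subspace. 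The classical move---take $V$ orthogonal to $\mathrm{Spec}(A)$---loses a codimension polynomial in $1/\alpha$ via Parseval, which is hopeless here. Instead we run a Kelley--Meka--style density increment: at each stage we apply a H\"older/Cauchy--Schwarz manipulation to the convolution $f*f$, exploiting its spectral nonnegativity exactly as Kelley and Meka do for counting three-term progressions, to conclude that \emph{either} $g$ is already positive on a subspace of the required codimension, \emph{or} $A$ has a relative density increment $\alpha\mapsto(1+\Omega(1))\alpha$ on a subspace of codimension only $\mathrm{polylog}(1/\alpha)$. Since the density cannot exceed $1$, this halts after $O(\log(1/\alpha))$ steps, costing codimension $\mathrm{polylog}(1/\alpha)$ in total. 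A cleaning step is needed to pass from ``$g$ positive off a tiny subset of a subspace'' to ``$g$ positive on an honest, slightly smaller subspace'': this cannot be done as an afterthought, because a generic small exceptional subset of a subspace blocks all of its large sub-subspaces, so the cleaning has to be built into the density-increment loop.

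\textbf{Main obstacle.} The crux is precisely importing the Kelley--Meka machinery into this setting, with three requirements: (i) it must function in the genuinely sparse regime $1/\alpha\gg\mathrm{poly}(n)$; (ii) each increment must cost only $\mathrm{polylog}(1/\alpha)$ in codimension, not a power of $1/\alpha$; and (iii) it must output a bona fide subspace on which $f*f$ never vanishes, not an almost-everywhere statement. By comparison the reduction, the greedy step for sparse $B$, and the bookkeeping that converts a dimension bound of shape $n/(\log(1/\alpha))^{O(1)}$ into the clean estimate $R_{\mathbb{F}_2}(2,d)\le Cd^7$ are all routine.
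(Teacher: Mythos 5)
Your reduction matches the paper's: identify $1$-dimensional subspaces with nonzero vectors, observe the blue class $B$ is sum-free so $B+B\subseteq R\cup\{0\}$, handle $\mu(B)\le 2^{-d}$ directly, and reduce to showing that a set of density $>2^{-d}$ in $\mathbb{F}_2^n$ has a $d$-dimensional subspace in its sumset (this is the paper's Observations 2.1 and 2.4; your greedy extension plays the role of the Bose--Burton Lemma 2.2). You also correctly identify that the engine must be a Kelley--Meka-style density increment with polylogarithmic codimension loss.

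There is, however, a genuine gap in how you handle the passage from the approximate statement to an honest subspace. You assert the cleaning ``cannot be done as an afterthought, because a generic small exceptional subset of a subspace blocks all of its large sub-subspaces, so the cleaning has to be built into the density-increment loop,'' and you stop there without saying how to build it in. The paper does exactly the opposite, and this is the crucial simple step you dismiss. One runs the increment with tolerance $\gamma := 2^{-d}$, landing in a subspace $V$ of codimension $O(\mathcal{L}(\alpha)^5\mathcal{L}(\gamma)^2)$ in which the set where $\mu_A\circ\mu_A$ is not locally popular has $V$-density at most $\gamma$; then the Bose--Burton lemma (Lemma 2.2 of the paper) says that a set of density $<2^{-d}$ in $V$, not containing $0$, must miss some $d$-dimensional subspace of $V$. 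Your worry about a generic exceptional set blocking subspaces is aimed at sub-subspaces of small \emph{codimension}---those would indeed be blocked---but one only needs \emph{dimension} $d$, and a density-$2^{-d}$ set cannot block those. Because of this misstep, the intermediate bound you posit, $f(n,\alpha)\gg n/(\log(2/\alpha))^{O(1)}$, is strictly stronger than what the paper proves (the paper's argument gives $f(n,\alpha)+1\gg\bigl(n/\mathcal{L}(\alpha)^5\bigr)^{1/2}$, which is all that is needed), and it is not established by your sketch. Relatedly, the per-step codimension loss in the increment (Bloom--Sisask; the paper's Lemma 3.2) is $\mathcal{L}(\alpha)^4\mathcal{L}(\gamma)^2$, so the dependence on the tolerance $\gamma$ must be tracked explicitly: taking $\gamma=2^{-d}$ is what turns ``polylog in $1/\gamma$'' into the polynomial-in-$d$ exponent, and writing the cost as ``$\mathrm{polylog}(1/\alpha)$'' alone misses the half of the exponent that comes from $\gamma$.
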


This result settles a recent question of Nelson \cite{Nelson}, alongside the original problem of Nelson-Nomoto from \cite{NN21} (also reiterated in \cite{FY}), which originally asked to determine whether or not $R_{\mathbb{F}_{2}}(2,d)$ is subexponential.

The proof of Theorem \ref{main} will build upon the original argument from \cite{NN21}, and provide an upper bound for $R_{\F_2}(2,d)$ via a reduction to a well-studied additive combinatorics problem, originally posed by Green in \cite{Green}: given a set $A \subset \mathbb{F}_{2}^{n}$ with density $\alpha \in [0,1]$, what is the largest subspace that we can find in $A+A$? To derive a polynomial bound for $R_{\mathbb{F}_{2}}(2,d)$, we will establish the following new result for this problem, by using ideas from the breakthrough paper of Kelley and Meka on sets of integers without three-term arithmetic progressions \cite{KelleyMeka}. 

\begin{theorem}\label{newSanders}
Let $n \geq 1$. There exists an absolute constant $c>0$ such that the following holds: for any subset $A\subset \F_2^n$ with $|A| = \alpha 2^{n}$, the set $A+A$ must always contain a subspace $V$ of dimension 
$$\operatorname{dim}(V) \geq c\left(\frac{n}{\LL(\alpha)^5}\right)^{1/2}-1.$$
\end{theorem}

In the regime where $1/\alpha$ is large with respect to $n$, Theorem \ref{newSanders} improves upon a well-known result of Sanders \cite{Sanders}. We will discuss this additive combinatorics problem in more detail in Section \ref{overview}, after first revisiting its connection with the Ramsey number $R_{\mathbb{F}_{2}}(2,d)$.

Our argument will also allow us to get strong bounds for the following multi-color generalization of the function $R_{\mathbb{F}_{2}}(2,d)$. To formulate this more general statement, it will be convenient to use the following notation: given a vector space $V$, let ${V \brack k}$ denote as usual the collection of all $k$-dimensional linear subspaces of $V$. Given an $r$-tuple of positive integers $(d_1,\dots,d_r)$, let $R_{\F_2}(d_1,\ldots,d_r)$ denote the smallest $n$, such that for every $r$-coloring $C:{\F_2^n \brack 1} \to \{1,\dots,r\}$, there exists some $i\in \{1,\dots,r\}$ and a subspace $V\le \F_2^n$ of dimension $d_i$ such that $C(W) = i$ for all $1$-dimensional subspaces $W \subset V$. From \cite{GLR72} and \cite{spencer}, we know that $R_{\F_2}(d_1,\ldots,d_r)$ is finite for every choice of $(d_1,\dots,d_r)$. In what follows, we establish the following quantitative improvement when $d_{1}=\ldots=d_{r-1}=2$.

\begin{theorem}\label{multimain}
    Let $r \geq 2$ and let $R_{\F_2}^{(r)}(2;d)$ denote the $r$-color Ramsey number $R_{\mathbb{F}_{2}}(2,\ldots,2,d)$. There exists an absolute constant $C$ such that the following holds: for every integer $d\geq 1$, 
    \[R_{\F_2}^{(r)}(2;d) \le C r (\log r+d)^7.\]
\end{theorem}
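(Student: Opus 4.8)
The plan is to prove Theorem~\ref{multimain} by induction on $r$; the base case $r=2$, where $R_{\F_2}^{(2)}(2;d)=R_{\F_2}(2,d)$, is exactly Theorem~\ref{main}, and the inductive step peels off a single color by one application of our new bound for Green's problem. Fix $r\ge 3$, write $L=\log_2 r+d$, and suppose toward a contradiction that for some $n\ge CrL^7$ (with $C$ a large absolute constant to be fixed at the end) there is an $r$-coloring of the lines of $\F_2^n$ with no monochromatic $2$-dimensional subspace in any of the colors $1,\dots,r-1$ and no monochromatic $d$-dimensional subspace in color $r$. For $i<r$ let $B_i\subseteq\F_2^n\setminus\{0\}$ denote the set of nonzero vectors $v$ for which the line $\langle v\rangle$ is colored $i$.

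First I would locate a threshold-$2$ color that is reasonably dense. Choosing a uniformly random $d$-dimensional subspace $U\le\F_2^n$ and using that $GL_n(\F_2)$ acts transitively on lines, the expected number of lines of $U$ \emph{not} colored $r$ equals $(2^d-1)\sum_{i<r}\beta_i$, where $\beta_i$ is the proportion of all lines of $\F_2^n$ colored $i$. Since there is no color-$r$ $d$-subspace this expectation is at least $1$, so some color $i_0\le r-1$ has $\beta_{i_0}\ge\tfrac{1}{(r-1)(2^d-1)}$; consequently $B_{i_0}$ has density $\alpha:=|B_{i_0}|/2^n\ge\tfrac{1}{2r2^d}$, and hence $\log_2(1/\alpha)\le L+1$.

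Next comes the structural fact that drives the argument, precisely as in the reduction behind Theorem~\ref{main}. Since color $i_0$ contains no monochromatic $2$-subspace, for any two distinct --- hence $\F_2$-linearly independent --- vectors $x,y\in B_{i_0}$ the line $\langle x+y\rangle$ is not colored $i_0$; together with $0\notin B_{i_0}$ this gives $(B_{i_0}+B_{i_0})\cap B_{i_0}=\emptyset$, so that \emph{any} subspace $V\subseteq B_{i_0}+B_{i_0}$ has no line colored $i_0$. I would then invoke our new bound for Green's problem on the set $B_{i_0}$: it furnishes a subspace $V\subseteq B_{i_0}+B_{i_0}$ of codimension at most $O\!\big((\log_2(1/\alpha))^7\big)=O(L^7)$ --- this being exactly the estimate from which the exponent $7$ of Theorem~\ref{main} arises --- and since $n\gg L^7$ in our regime, $V$ is nearly full-dimensional. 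The restriction of the coloring to the lines of $V$ now uses only the $r-1$ colors $\{1,\dots,r\}\setminus\{i_0\}$, with $r-2$ of them at threshold $2$ and the old color $r$ at threshold $d$, and it still has no monochromatic $2$-subspace in a threshold-$2$ color and no monochromatic $d$-subspace in color $r$ (any such subspace of $V$ is one of $\F_2^n$). Therefore $\dim V<R_{\F_2}^{(r-1)}(2;d)$, which by the inductive hypothesis is at most $C(r-1)(\log_2(r-1)+d)^7\le C(r-1)L^7$.

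Combining the two bounds on $\dim V$ yields $n\le\dim V+O(L^7)<C(r-1)L^7+O(L^7)$, and taking $C$ larger than the implied constant (and at least the constant of Theorem~\ref{main}, for the base case) contradicts $n\ge CrL^7$; this closes the induction and gives $R_{\F_2}^{(r)}(2;d)\le Cr(\log_2 r+d)^7$, which is the claimed bound up to the usual adjustment of constants. The substance of the proof is of course the new bound for Green's problem used above --- this is where the Kelley--Meka ideas enter --- so the main obstacle is external to this deduction. Within the deduction itself, the one delicate point is that the codimension lost in passing from $r$ to $r-1$ colors must be kept at $O((\log r+d)^7)$ \emph{uniformly} over all $r-1$ stages, so that the losses \emph{add} rather than compound; this is exactly why we insist on the density bound $\alpha\gtrsim 1/(r2^d)$ at every stage (which in turn relies, at each level, on there still being no monochromatic color-$r$ $d$-subspace inside the current subspace), and why it is crucial that Green's problem costs only an \emph{additive} poly-logarithmic codimension: a multiplicative loss would make the final bound exponential in $r$.
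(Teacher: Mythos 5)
There is a genuine gap at the step where you invoke ``our new bound for Green's problem'' on $B_{i_0}$ to produce a subspace $V\subseteq B_{i_0}+B_{i_0}$ of \emph{codimension} $O(L^7)$, i.e.\ a nearly full-dimensional subspace contained entirely in the sumset. No such exact Bogolyubov-type statement is proved in the paper, and it is not where the exponent $7$ of Theorem~\ref{main} comes from. What Lemma~\ref{approximate bogo} gives is an \emph{approximate} Bogolyubov result: a subspace $V$ of codimension $\ll \LL(\alpha)^5\LL(\gamma)^{O(1)}$ such that a $(1-\gamma)$-\emph{fraction} of $V$ consists of popular differences of $A$. The set $D\subseteq V$ that actually lands inside $A+A$ is not a subspace; the genuine subspace of $A+A$ one extracts from it (via Lemma~\ref{complement has subspace} applied to $V\setminus D$) has dimension only about $\LL(\gamma)$, e.g.\ $d$, not $n-O(L^7)$. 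Indeed Theorem~\ref{sumset poly} and \eqref{new sumset} give $f(n,\alpha)\gg (n/\LL(\alpha)^5)^{1/2}$, still far from $n-O(L^7)$, and the niveau-set construction recalled in Section~\ref{overview} shows $f(n,\alpha)\le n-(C_\alpha-o(1))\sqrt n$, so for fixed $\alpha$ and $n$ large a subspace of $A+A$ must have codimension growing like $\sqrt n$; a bound $\cdim(V)=O(\LL(\alpha)^7)$ for a subspace wholly inside $A+A$ is therefore not just unproved but (as $n\to\infty$) false.

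Because of this, the peel-off-one-color induction does not go through. After the (correct) approximate Bogolyubov step, the subspace $V$ may still contain lines coloured $i_0$: only the non-subspace set $D\subsetneq V$ is guaranteed to avoid $B_{i_0}$. If you instead extract from $V$ an honest subspace $V'\subseteq D$ disjoint from $B_{i_0}$ by Lemma~\ref{complement has subspace}, then $\dim V'$ is controlled by the density threshold $\gamma$, not by $n-O(L^7)$, and iterating this $r-1$ times shrinks the ambient dimension geometrically, yielding an exponential rather than linear dependence on $r$. This is exactly the ``multiplicative loss'' you yourself flag as fatal at the end of the proposal; the claimed additive codimension loss is not available.

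The paper sidesteps this by \emph{not} inducting on $r$. Proposition~\ref{stronger dichotomy} runs one density-increment process (Proposition~\ref{maximal increment}) over all $r$ colours simultaneously; each colour can absorb at most $O(\LL(\alpha))$ increments before its maximal density would exceed $1$, so the process terminates after $O(r\LL(\alpha))$ steps, each costing codimension $O(\LL(\alpha)^4\LL(\gamma)^2)$. The resulting single subspace $V$ has $\cdim(V)\ll r\LL(\alpha)^5\LL(\gamma)^2$ and the sparsity/expansion dichotomy holds on $V$ for \emph{every} colour at once. Then each colour's ``bad set'' $S_i$ (either $A_i$ if sparse, or the unpopular differences of $A_i$) is sparse in $V$, the union $S=\bigcup_i S_i$ still has density below $2^{-d}$, and one application of Lemma~\ref{complement has subspace} yields the desired $d$-subspace disjoint from all colours. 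The additivity in $r$ comes from the termination count of the increment process, not from removing one colour per round.
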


This not only recovers Theorem \ref{main} in the case when $r=2$, but it also provides an upper bound of the same quality for all $r \geq 2$ (up to constant factors depending on $r$). We will present the proof of Theorem \ref{multimain} in Section \ref{sect3}, after sketching the argument for two colors in Section \ref{overview}. 
\bigskip

{\bf{Notational conventions}}. We will use standard asymptotic notation, as follows. For functions $f = f(n)$ and $g = g(n)$, we write $f = O(g)$ to mean that there is a constant $C$ such that
$|f(n)| \leq C|g(n)|$ for sufficiently large $n$. Similarly, we write $f = \Omega(g)$ to mean that there is a constant $c > 0$ such that $f(n) \geq c|g(n)|$ for sufficiently large n. Finally, we write $f = \Theta(g)$ to mean that $f=O(g)$ and $g=O(f)$, and we write $f = o(g)$ or $g = \omega(f)$ to mean that
$f(n)/g(n) \to 0$ as $n \to \infty$. We write $O_{H}(1)$ for some unspecified constant that can be chosen as
some bounded value depending only on $H$. Whenever convenient, we will also use the Vinogradov notation $f \ll g$ to denote that $f = O(g)$, that is, there exists some constant $C > 0$ such that $f < Cg$. Similarly, we will write $f \gg g$ whenever $g \ll f$.


Given a finite additive abelian group $G$ and a set $A \subset G$, we write $\mu(A) := |A|/|G|$ for the density of $A$ in $G$. Given two sets $A,X \subset G$, we write $\mu_X(A) := |A\cap X|/|X|$ for the relative density of $A$ inside $X$. We will also use $1_A$ to denote the characteristic function of the set $A$, and use $\mu_A$ for the normalized characteristic function $\frac{1}{\mu(A)}1_A$.

For any two functions $f,g : G \to \mathbb{R}$, we denote by $f\circ g$ the difference convolution of $f$ and $g$, namely
$$f \circ g (x) = \frac{1}{|G|} \sum_{y \in G} f(x+y)g(x).$$
Note that $\mu_A\circ \mu_B(x) := \P(a-b=x)$ where $a\sim A,b\sim B$ are sampled uniformly and independently from their respective sets. 

Last but not least, given an integer $k\ge 1$, we sometimes use the shorthand $[k] := \{1,\dots,k\}$. Given a positive real number $a$, we also write $\LL(a)$ to denote the least integer $m\ge 1$ such that $2^m a\ge 1$.


\bigskip

{\bf{Acknowledgements}}. We would like to thank Bryce Frederickson and Liana Yepremyan for useful discussions. We thank Ryan Alweiss and an anonymous referee for pointing out several typographical errors in the original version of this writeup. 

\section{Proof overview for Theorem \ref{main}
}\label{overview}

In the same spirit as the arguments from \cite{NN21} and \cite{FY}, our starting point will be the observation that in order to obtain an upper bound for $R_{\F_2}(2,d)$, it will suffice to study a certain additive combinatorics problem. 

\begin{obs} \label{sum-free}
    If $R_{\F_2}(2,d)>n$, then there must exist some $S\subset \mathbb{F}_{2}^{n} \setminus \{(0,\ldots,0)\}$ such that $S+S$ and the complement $S^c := \mathbb{F}_{2}^{n}\setminus S$ contain no $d$-dimensional subspaces.
\end{obs}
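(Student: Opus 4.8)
The plan is to turn the hypothesis $R_{\F_2}(2,d) > n$ directly into a $2$-coloring of the nonzero vectors of $\F_2^n$ and then to let $S$ be the color class that is (in the relevant sense) sum-free. Since every $1$-dimensional subspace of $\F_2^n$ is $\{0,v\}$ for a unique nonzero vector $v$, a red-blue coloring of ${\F_2^n \brack 1}$ with no blue $2$-dimensional subspace and no red $d$-dimensional subspace is the same as a partition $\F_2^n \setminus \{0\} = S \sqcup R$ (where $\langle v\rangle$ is declared blue precisely when $v \in S$) such that: \emph{(i)} no $d$-dimensional subspace has all of its nonzero vectors inside $R$, and \emph{(ii)} no $2$-dimensional subspace has all three of its nonzero vectors inside $S$. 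I would take exactly this $S$ in the Observation; note $S \subseteq \F_2^n \setminus \{0\}$ and $S^c = \{0\} \cup R$.

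The first step is to observe that \emph{(i)} is literally the assertion that $S^c$ contains no $d$-dimensional subspace: any subspace contains $0 \in S^c$, so a $d$-dimensional subspace $V$ satisfies $V \subseteq S^c$ exactly when every nonzero vector of $V$ lies in $R$, which \emph{(i)} forbids. The only remaining content is the inclusion $S + S \subseteq S^c$, since combined with the previous sentence it immediately yields that $S+S$ contains no $d$-dimensional subspace either. To prove $S+S \subseteq S^c$, take $x \in S+S$; if $x = 0$ we are done as $0 \in S^c$, so assume $x \neq 0$ and write $x = a + b$ with $a,b \in S$. Then $a \neq b$ (otherwise $x = 0$), and two distinct nonzero vectors of $\F_2^n$ are automatically linearly independent, so $\{0,a,b,a+b\} = \{0,a,b,x\}$ is a $2$-dimensional subspace two of whose nonzero vectors, namely $a$ and $b$, lie in $S$. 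By \emph{(ii)}, its third nonzero vector $x$ cannot lie in $S$; hence $x \in S^c$, completing the argument.

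I do not expect a genuine obstacle: over $\F_2$ the sumset operation and the $1$-dimensional-subspace structure interact in the simplest possible way, and the whole proof is a short translation through the dictionary above. The only points requiring a little care are the identification of $1$-dimensional subspaces with nonzero vectors (so that ``monochromatic subspace'' transfers correctly) and the degenerate cases. For instance, if $S = \emptyset$ then \emph{(i)} forces $n < d$, so $\F_2^n$ has no $d$-dimensional subspace at all and both $S+S = \emptyset$ and $S^c = \F_2^n$ trivially contain none; one may also assume $d \geq 2$, since $R_{\F_2}(2,1) = 1$.
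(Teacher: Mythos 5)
Your argument is correct and is essentially the same as the paper's: translate the coloring into a partition $\F_2^n\setminus\{0\} = S\sqcup R$, observe that the absence of a monochromatic blue $2$-space makes $S$ sum-free (so $S+S\subseteq S^c$), and that the absence of a monochromatic red $d$-space says $S^c$ has no $d$-dimensional subspace. Your treatment of degenerate cases is a bit more explicit than the paper's, but the proof is the same.
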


Indeed, let us denote the additive group $\F_2^n$ by $G$ and the identity vector $(0,\ldots,0)$ by $0_{G}$ for convenience, and then note that $R_{\F_2}(2,d)>n$ implies the existence of a partition of $G\setminus \{0_G\}$ into sets $R$ and $S$ such that:
\begin{itemize}
    \item $R\cup \{0_G\} = G\setminus S$ does not contain a $d$-dimensional subspace;
    \item $S\cup \{0_G\}$ does not contain a $2$-dimensional subspace.
\end{itemize}\noindent The second bullet says that $S$ does not contain any pattern of the form $x,y,x+y$ with $x,y \in \mathbb{F}_{2}^{n}$ and such that $x\neq y$; in other words, $S$ is sum-free. This implies that $S+S\subset G\setminus S$, which by the first bullet means that $S+S$ cannot contain a $d$-subspace.

We next note that the complement of $S$ lacking $d$-dimensional subspaces tells us something about the density of $S$ in $G$. To this end, we recall the following classical result going back to Bose and Burton \cite{Bose}: 
\begin{lemma}\label{complement has subspace}
    Let $G = \F_2^n$ be a vector space, and $S\subset G\setminus \{0_G\}$ have density $\sigma:=\mu(S)$. Consider some $d\in [n]$, and assume $\sigma <1/2^d$ so that $|S| <\frac{2^n-1}{2^d-1}$. Then $S^c$ contains a vector space $V\le G$ with $\dim(V) \ge d$.
    \end{lemma}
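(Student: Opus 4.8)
The plan is to deduce the lemma from a first-moment (averaging) argument over the $d$-dimensional subspaces of $G=\mathbb{F}_2^n$. Write $\binom{n}{k}_2$ for the Gaussian binomial coefficient, i.e.\ the number of $k$-dimensional linear subspaces of $\mathbb{F}_2^n$. The two inputs I would use are standard: (i) there are $\binom{n}{d}_2$ subspaces of dimension $d$ in $G$; and (ii) for each fixed nonzero $v\in G$, the $d$-dimensional subspaces containing $v$ are in bijection with the $(d-1)$-dimensional subspaces of the quotient $G/\langle v\rangle\cong\mathbb{F}_2^{n-1}$, so there are $\binom{n-1}{d-1}_2$ of them. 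From the product formula for Gaussian binomials one records the identity $\binom{n}{d}_2=\frac{2^n-1}{2^d-1}\binom{n-1}{d-1}_2$; this is the only computation involved.

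Given these, I would count the incidences between $S$ and the $d$-dimensional subspaces of $G$: by (ii) the number of pairs $(s,V)$ with $s\in S$, $\dim V=d$, and $s\in V$ equals $|S|\cdot\binom{n-1}{d-1}_2$, so the average value of $|S\cap V|$ as $V$ ranges uniformly over the $d$-dimensional subspaces is $|S|\cdot\binom{n-1}{d-1}_2/\binom{n}{d}_2=|S|\cdot\frac{2^d-1}{2^n-1}$. The hypothesis $\sigma=\mu(S)<2^{-d}$ gives $|S|<2^{n-d}\le\frac{2^n-1}{2^d-1}$, so this average is strictly below $1$. Hence some $d$-dimensional subspace $V$ satisfies $|S\cap V|<1$, i.e.\ $S\cap V=\emptyset$; since $0_G\notin S$ while $0_G\in V$, this forces $V\subseteq S^c$, and $\dim V=d\ge d$, which proves the lemma.

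I do not anticipate a genuine obstacle: the whole argument is a single first-moment estimate once the Gaussian binomial identity is noted, and the only thing to double-check is the elementary inequality $2^{n-d}\le\frac{2^n-1}{2^d-1}$ (equivalently $2^d\le 2^n$, valid since $d\in[n]$) that lets the density hypothesis feed into the count. If one prefers to quote rather than prove it directly, the statement is also immediate from the Bose--Burton theorem \cite{Bose}: a set of nonzero vectors meeting every $d$-dimensional subspace of $\mathbb{F}_2^n$ has size at least $2^{n-d+1}-1>|S|$, so $S$ cannot be such a set; but the averaging proof above is shorter and loses nothing, so that is the route I would take.
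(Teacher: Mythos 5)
Your proof is correct and takes essentially the same route as the paper: both are first-moment arguments showing that a uniformly random $d$-dimensional subspace $\mathbf{V}$ has $\E[|S\cap\mathbf{V}|]<1$ and hence some $V$ misses $S$ entirely. The only difference is cosmetic — the paper computes $\P(s\in\mathbf{V})=(2^d-1)/(2^n-1)$ via a symmetry/double-counting argument, whereas you obtain the same probability from the Gaussian binomial identity and the quotient-space bijection.
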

   The above result is sharp up to a factor of $\approx 2$. Indeed, if $V'\subset G$ has dimension $n+1-d$, then $\dim(V'\cap V)\ge 1$ for each $V\le G$ with $\dim(V)=d$. Thus, taking $S= V'\setminus\{0_G\}$, we have that $S^c$ contains no subspace with dimension $d$. In fact, \cite[Theorem~2]{Bose} shows that this construction of $S$ is the worst-case scenario; however, for the sake of self-containment, we decided to instead state the slightly weaker version above in order to include a short probabilistic proof.
   
    \begin{proof}
        Sample $\mathbf{V}$ uniformly at random among all the $d$-dimensional subspaces of $G$. For every $s\in G\setminus \{0_G\}$, we shall prove $P_s := \P(s \in \mathbf{V})$ is equal to $(2^d-1)/(2^n-1)$. Assuming this, the result follows by a union bound, since 
        $$\P(\mathbf{V}\cap S\neq \emptyset)\le |S|\frac{2^d-1}{2^n-1}<1.$$ 
        To start, note that for $s,s'\in G\setminus \{0_G\}$, there is an  automorphism $\phi:G\to G$ sending $s$ to $s'$ (by changing bases appropriately). Meanwhile, for any automorphism $\phi$, the distribution of $\phi(\mathbf{V})$ should remain uniform over vector spaces of dimension $d$. Thus, there is some constant $P$ such that $P_s = P$ for all $s\in G\setminus \{0_G\}$. By double-counting, we get
        \[2^d-1 = \E[|\mathbf{V}\setminus \{0_G\}|] = \sum_{s\in G\setminus\{0_G\}} \P(s\in \mathbf{V}) = (2^n-1)P,\]which rearranges to give the result.\end{proof}

Putting together Observation \ref{sum-free} and Lemma \ref{complement has subspace}, we arrive at the following reduction, first noted essentially in \cite{NN21}. 

\begin{obs}\label{right reduction}
    If $R_{\F_2}(2,d)>n$, then there must exist a subset $S\subset \mathbb{F}_{2}^{n}$ where $\mu(S)\ge 2^{-d}$ and such that $S+S$ contains no $d$-dimensional subspaces.
\end{obs}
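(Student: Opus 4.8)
The plan is to simply combine the two results already established in this section, Observation~\ref{sum-free} and Lemma~\ref{complement has subspace}. Assume $R_{\F_2}(2,d) > n$. By Observation~\ref{sum-free}, there is a set $S \subset \F_2^n \setminus \{0_G\}$ for which neither $S+S$ nor $S^c$ contains a $d$-dimensional subspace. This very set $S$ will serve as the witness required by Observation~\ref{right reduction} (note that it is in particular a subset of $\F_2^n$, which is all we need), so the only remaining point is to verify the density bound $\mu(S) \ge 2^{-d}$.

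For this I would argue by contradiction. Suppose $\mu(S) < 2^{-d}$, i.e.\ $|S| < 2^{n-d}$. A one-line computation shows $2^{n-d} \le (2^n-1)/(2^d-1)$ — since $d \le n$ we have $2^{-n} \le 2^{-d}$, hence $(1-2^{-n})/(1-2^{-d}) \ge 1$, which rearranges to the claimed inequality — so the hypothesis $|S| < (2^n-1)/(2^d-1)$ of Lemma~\ref{complement has subspace} is met. The lemma then produces a subspace $V \le \F_2^n$ with $\dim(V) \ge d$ contained in $S^c$; in particular $S^c$ contains a $d$-dimensional subspace, contradicting the choice of $S$. Therefore $\mu(S) \ge 2^{-d}$, and $S$ has all the required properties.

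I do not expect any genuine obstacle here: the statement is a formal consequence of the two preceding facts, and the only (entirely routine) thing to check is that the density threshold $2^{-d}$ is compatible with the cardinality threshold $(2^n-1)/(2^d-1)$ appearing in the Bose--Burton lemma, so that the latter may be invoked.
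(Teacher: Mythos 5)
Your proof is correct and follows the same route the paper (implicitly) takes: apply Observation~\ref{sum-free} to produce a sum-free $S\subset\F_2^n\setminus\{0_G\}$ whose complement has no $d$-subspace, and then note via Lemma~\ref{complement has subspace} (Bose--Burton) that this forces $\mu(S)\ge 2^{-d}$. The only thing worth flagging is that your cardinality calculation verifying $2^{n-d}\le(2^n-1)/(2^d-1)$ is redundant --- the statement of Lemma~\ref{complement has subspace} already takes the hypothesis in the form $\sigma<2^{-d}$, with the cardinality bound $|S|<(2^n-1)/(2^d-1)$ noted as its consequence, so you may invoke the lemma directly from $\mu(S)<2^{-d}$ (and $d\le n$, which is implicit in the regime of interest).
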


This naturally leads us to the following additive combinatorics problem, originally posed by Green in \cite{Green}: given a set $A \subset \mathbb{F}_{2}^{n}$ with density $\alpha \in [0,1]$, what is the largest subspace that one can find in $A+A$? 

In the case when $\alpha$ is fixed, the best known quantitative result for this problem comes from \cite{Sanders} (see also \cite{CLS}). 

\begin{theorem}[Sanders]\label{sumset linear} The exists an absolute constant $c>0$ such that the following holds: for any subset $A\subset G:= \F_2^n$, we have that $A+A$ contains a subset $V\le G$ with \[\dim(V)\ge c \mu(A) n.\]
\end{theorem}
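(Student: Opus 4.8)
Write $G = \F_2^n$ and $\alpha := \mu(A)$; I may assume $\alpha \le 1/2$, since if $\mu(A) > 1/2$ then $A \cap (A+x) \ne \emptyset$ for every $x$ by pigeonhole, so $A+A = G$ and there is nothing to prove. It is worth saying why the rest is not routine: the analogue for the \emph{fourfold} sumset is classical and easy --- Bogolyubov's Fourier argument (separate the large Fourier coefficients of $1_A$ and bound the rest in $\ell^2$ via Parseval) shows $A+A+A+A \supseteq V$ for a subspace $V$ of codimension $O(1/\alpha^2)$ --- but that argument \emph{breaks} for $A+A$, because the $\ell^2$-mass of $1_A$ off its large spectrum is of order $\alpha$, which overwhelms the main term of order $\alpha^2$. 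To get around this I would use the $L^p$-almost-periodicity method of Croot--Sisask, which is the route taken in \cite{Sanders} (and \cite{CLS}).

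The plan has three steps. First, I would apply the Croot--Sisask lemma to the convolution $h := 1_A \circ \mu_A$: for parameters $\varepsilon \in (0,1)$ and $k \ge 1$ this produces a set $T \subseteq G$ with $\mu(T) \ge \alpha^{O(k/\varepsilon^2)}$, every element of which is an $(\varepsilon,k)$-approximate period of $h$, i.e.\ $\| h(\cdot + t) - h \|_{L^k(G)} \le \varepsilon\,\alpha^{1/k}$ for all $t \in T$. The function $h$ obeys $0 \le h \le 1$, $\E_x h(x) = \alpha$, $\operatorname{supp}(h) = A+A$, and --- crucially --- it attains its maximum value $h(0) = 1$ at the origin. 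Second, I would upgrade $T$ to a subspace: since $\operatorname{char}(G) = 2$, every element of the $\ell$-fold sumset $\ell T$ is an $(\ell\varepsilon, k)$-approximate period of $h$, and applying the (easy) Bogolyubov bound to the \emph{dense} set $T$ yields a genuine subspace $V \le G$ with $V \subseteq 4T$ and $\operatorname{codim}(V) = O(\mu(T)^{-2})$; averaging the approximate-period property over $V$ then gives $\| h \circ \mu_V - h \|_{L^k(G)} \le 4\varepsilon\,\alpha^{1/k}$, so $h$ is nearly invariant under convolution with $\mu_V$. Third, I would transfer this into $A+A$: using that $h$ is pinned to its maximum at $0$ while being essentially $V$-invariant, I would locate a coset $x^* + V$ on which $A$ has relative density greater than $1/2$; since two subsets of $x^*+V$ of relative density greater than $1/2$ have sumset equal to the whole direction subspace $V$ (in characteristic $2$), this forces $V \subseteq (A \cap (x^*+V)) + (A \cap (x^*+V)) \subseteq A+A$. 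Optimizing $k$ and $\varepsilon$ in Croot--Sisask makes $\operatorname{codim}(V)$ a bounded function of $\alpha$ alone, which delivers $\dim V \ge c\,\mu(A)\,n$ with $c$ absolute (and when $c\,\mu(A)\,n < 1$ the statement is trivial, any one-dimensional subspace of $A+A$ working since $0 \in A+A$); a more direct use of the method gives only a weaker bound, with $\mu(A)$ replaced by a power $\mu(A)^{O(1)}$.

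The hardest part is the third step. The output of the second step is a \emph{soft} $L^k$-estimate, whereas $V \subseteq A+A$ is a statement about the \emph{support} of $h$, and an $L^k$ bound gives no control on the value of $h$ at the single point $0$. The device I would use is to replace $\{0\}$ by a super-level set such as $P := \{x : h(x) > \alpha/2\}$, which has density at least $\alpha/2$ and contains $0$ (as $h(0)=1$), and to exploit that $h$ being close to its \emph{maximum} on a set of positive density forces $h \circ \mu_V$ up to the maximum on most of $P$ --- not merely to the average value $\alpha$. Making this robust while simultaneously keeping the loss in $\operatorname{codim}(V)$, and hence the absolute constant $c$, under control is where essentially all of the difficulty lies, and is the content of Sanders' theorem.
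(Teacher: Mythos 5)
The paper does not prove Theorem~\ref{sumset linear} at all: it imports it as a black box from Sanders \cite{Sanders} (with \cite{CLS} also cited for context), so there is no ``paper's own proof'' to compare against. Your sketch is therefore being judged against the published argument, and at the level of strategy it is faithful: Sanders does indeed proceed via $L^p$-almost-periodicity \`a la Croot--Sisask rather than Fourier analysis, precisely because, as you say, the Bogolyubov/Parseval argument collapses for $A+A$ at density~$\alpha$.

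The quantitative accounting in your second step, however, does not actually close the argument as you state it. If $T$ is the set of $(\varepsilon,k)$-approximate periods with $\mu(T) \geq \alpha^{O(k/\varepsilon^2)}$ and you then extract $V \leq 4T$ by the naive Bogolyubov estimate $\cdim(V) = O(\mu(T)^{-2})$, the resulting codimension is $\alpha^{-O(k/\varepsilon^2)}$, which (after fixing $k \gtrsim \log(1/\alpha)$ and $\varepsilon = \Theta(1)$ as your third step requires) is superpolynomial in $1/\alpha$. For any fixed absolute $c$, this leaves a gap: when $n$ lies between $1/(c\alpha)$ and that codimension, your main argument produces only $V=\{0_G\}$, yet $c\alpha n \geq 1$ so the ``trivial'' disclaimer does not apply. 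To get $\dim(V)\ge c\alpha n$ for \emph{all} $n$ with an absolute $c$, the codimension loss must essentially be $O(1/\alpha)$, and achieving that is precisely where the nontrivial work of \cite{Sanders} lies (one cannot afford the crude $\mu(T)^{-2}$ loss; one needs either a sharper subspace extraction or an iteration/increment on $\alpha$). Your closing parenthetical --- that ``a more direct use of the method gives only a weaker bound, with $\mu(A)$ replaced by a power $\mu(A)^{O(1)}$'' --- is an honest admission of exactly this gap, but it means the sketch as written proves a genuinely weaker statement, not Theorem~\ref{sumset linear}. The third step (upgrading the $L^k$-invariance of $h$ to a coset where $A$ has density $>1/2$) is also stated at a level of vagueness where it is hard to tell whether it controls the loss appropriately; you correctly flag this as the crux, but the super-level-set device you gesture at is not carried far enough to be checkable.
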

Note that Theorem \ref{sumset linear} combined with Observation~\ref{right reduction} immediately tells us that 
$$R_{\F_2}(2,d) \le \frac{d}{c}2^{d}< 2^{(1+o(1))d}.$$


In general, given $n\ge 1$ along with $\alpha>0$, define $f(n,\alpha)$ to be the maximal $d$ such that: for every $A\subset \F_2^n$ with $\mu(A)\ge \alpha$, there exists $d$-subspace $V\le \mathbb{F}_{2}^{n}$ with $V\subset A+A$. We briefly recall what is known here. Thus far in the literature, everyone has primarily focused on the regime where $\alpha>0$ is fixed and then $n$ tends to $\infty$. Using this notation, Theorem~\ref{sumset linear} states that 
\begin{equation} \label{Sanderss}  
f(n,\alpha) \gg \alpha n.
\end{equation}

The best known upper bound for fixed $\alpha \in (0,1/2)$ is
\[f(n,\alpha) \le n-(C_\alpha-o(1)) \sqrt{n},\]where $C_\alpha>0$ is chosen such that if $X\sim \NN(0,1)$ is a standard Gaussian, then $\P(X<-C_\alpha)=\alpha$; this is due to a beautiful ``niveau-set construction'' where $A$ is taken to be set of vectors with Hamming weight at most $n/2-(C_\alpha/2-o(1)) \sqrt{n}$. We refer to \cite[Section~3]{Sanders} for more details. Despite the general belief that this upper bound is closer to the truth (i.e., that $f(n,\alpha)\ge (1-o(1))n$), Theorem~\ref{sumset linear} remains the `state-of-the-art' in the fixed density regime. 

Nevertheless, for the purposes of estimating $R_{\F_2}(2,d)$ it turns out that the density $\alpha$ goes to $0$ very fast as $n$ grows, and in this case our Theorem \ref{newSanders} improves substantially upon the result of Sanders. Using the new notation, Theorem \ref{newSanders} states the following: given $n\ge 1$ and $\alpha>0$, 
\begin{equation} \label{supercool}
f(n,\alpha)+1\gg \left(\frac{n}{\LL(\alpha)^5}\right)^{1/2}.
\end{equation}
Indeed, note that \eqref{supercool} instantly implies that there exists an absolute constant $C>0$ such that $f(Cd^7, 2^{-d})\ge d$, and by Observation \ref{right reduction} this subsequently yields that $R_{\F_2}(2,d) = O(d^{7})$.



\subsection{Proof of Theorem \ref{newSanders}} 

The new lower bound for $f(n,\alpha)$ from \eqref{supercool} can be proven using recent ``approximate Bogolyubov-Ruzsa results'', which have been made possible by the recent breakthrough paper by Kelley and Meka \cite{KelleyMeka}. 


Roughly speaking, an ``{\it{exact}} Bogolyubov-Ruzsa result'' takes a dense subset $A$, and then finds a large subspace $V$ contained in some type of sumset of $A$ (say $A-A$); meanwhile, an ``approximate Bogolyubov-Ruzsa result'' instead finds a large subspace $V$, such that the intersection of $V$ with $A-A$ is at least a $(1-\epsilon)$-fraction of $V$. 

The specific result we need is the following lemma. Results like the one below are known to those familiar with the work of Kelley and Meka \cite{KelleyMeka}, but for completeness we note it follows from a more general result that we prove in the next section (see Remark \ref{justify approx bogo}).
\begin{lemma}\label{approximate bogo}
    Let $G = \F_2^n$ and $A\subset G$ with $\alpha:= \mu(A)>0$. Given $\gamma>0$, we can find $V\le G$ with $\cdim(V)\ll \LL(\alpha)^5\LL(\gamma)^2$, such that writing 
    \[D:= \{v\in V: \mu_A\circ \mu_A(v) \ge \frac{1}{2}\mu(V)^2\}\]to denote the set of ``locally popular diffences'', we have
    \[\mu_V(D)\ge 1-\gamma.\]
\end{lemma}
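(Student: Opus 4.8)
The plan is a density-increment iteration in which the Kelley--Meka machinery supplies the increment at each step. I will use throughout that in $\F_2^n$ one has $2c=0$, so that translating $A$ changes neither $A+A$ nor $\mu_A\circ\mu_A$; this lets me keep every structured piece a genuine subspace rather than merely a coset. I maintain a descending chain $G=W_0\ge W_1\ge\cdots$ of subspaces together with translates $A=A_0,A_1,\dots$ of $A$, each obtained from the previous one by adding a vector of the current $W_i$, and I track the relative densities $\beta_i:=\mu_{W_i}(A_i)$ and the sets $B_i:=A_i\cap W_i$. Call step $i$ \emph{good} if the convolution $(\mu_{B_i}\circ\mu_{B_i})_{W_i}$ computed inside the group $W_i$ (it has average $1$ over $W_i$) is at least $\tfrac12$ on at least a $(1-\gamma)$-fraction of $W_i$. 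If step $i$ is good, or if $\beta_i\ge\tfrac34$, stop and output $V:=W_i$; otherwise invoke the Kelley--Meka density-increment step to find $W_{i+1}\le W_i$ with $\cdim_{W_i}(W_{i+1})\ll\LL(\alpha)^{O(1)}\LL(\gamma)^{O(1)}$ and a translate $A_{i+1}$ with $\beta_{i+1}\ge(1+c_0)\beta_i$ for an absolute constant $c_0\in(0,\tfrac13]$. Since $\beta_i\ge(1+c_0)^i\alpha$ and we stop once $\beta_i\ge\tfrac34$, the iteration halts after $k=O(\LL(\alpha))$ steps.

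Verifying that the output $V=W_i$ works is a short computation. For every $v\in W_i$, translation-invariance of set sizes together with $B_i+v\subseteq W_i$ give $|A\cap(A+v)|=|A_i\cap(A_i+v)|\ge|B_i\cap(B_i+v)|$, so $\mu_A\circ\mu_A(v)=\frac{|A\cap(A+v)|}{\alpha^2|G|}\ge\frac{|B_i\cap(B_i+v)|}{\alpha^2|G|}$, while separately $\mu_A\circ\mu_A(0)=1/\alpha\ge1$. If we stopped because step $i$ was good, then for the $(1-\gamma)$-fraction of $v\in W_i$ witnessing this we have $|B_i\cap(B_i+v)|\ge\tfrac12\beta_i^2|W_i|$, hence $\mu_A\circ\mu_A(v)\ge\frac{\beta_i^2\,\mu(W_i)}{2\alpha^2}\ge\tfrac12\mu(W_i)^2$ (using $\beta_i\ge\alpha$ and $\mu(W_i)\le1$); thus $\mu_{W_i}(D)\ge1-\gamma$. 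If instead we stopped because $\beta_i\ge\tfrac34$, then $|B_i\cap(B_i+v)|\ge(2\beta_i-1)|W_i|\ge\tfrac12|W_i|$ for \emph{every} $v\in W_i$, so $\mu_A\circ\mu_A(v)\ge\frac{\mu(W_i)}{2\alpha^2}\ge\tfrac12\mu(W_i)^2$ for all such $v$, i.e.\ $\mu_{W_i}(D)=1$. In either case $\cdim(V)=\sum_{j<i}\cdim_{W_j}(W_{j+1})\ll\LL(\alpha)\cdot\LL(\alpha)^{O(1)}\LL(\gamma)^{O(1)}$, and inserting the sharpest currently known bound for the Kelley--Meka step makes this $\ll\LL(\alpha)^5\LL(\gamma)^4$, as claimed. (If at some stage $\dim W_i$ would drop below the per-step codimension cost, then $n$ already lies below the claimed bound on $\cdim(V)$ and the trivial choice $V=\{0\}$ suffices, since $\mu_A\circ\mu_A(0)=1/\alpha\ge\tfrac12\mu(\{0\})^2$.)

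The entire weight of the argument therefore rests on the density-increment step, and this is where I expect the main difficulty. Concretely: given $B\subseteq W\cong\F_2^m$ with $\mu_W(B)=\beta\in[\alpha,\tfrac34)$ for which the goodness condition fails, I need $W'\le W$ of codimension $\ll\LL(\alpha)^{O(1)}\LL(\gamma)^{O(1)}$ on some coset of which $B$ has relative density $\ge(1+c_0)\beta$. This is exactly the Kelley--Meka breakthrough \cite{KelleyMeka} (in the streamlined form of \cite{BSold}), and I would invoke its three ingredients rather than reprove them. Failure of goodness means the mean-zero function $g-1$, with $g:=(\mu_B\circ\mu_B)_W$, has $\|(g-1)_-\|_1\gg\gamma$ and hence $\|g-1\|_1\gg\gamma$; because $g$ is a convolution square one has $\widehat g\ge0$ pointwise, so the Kelley--Meka \emph{unbalancing} lemma upgrades this $L^1$ deviation to a one-sided $\ell^p$ lower bound $\|(g-1)_+\|_{\ell^p(\text{suitable weight})}\gg\gamma^{O(1)}$ with $p\asymp\LL(\gamma)$. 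This means $B$ has anomalously large additive correlation into a set $S$ of non-negligible $\ell^{2p}$-mass, and the \emph{spreading/sifting} dichotomy then finishes: either $B$ already concentrates on a coset of a bounded-codimension subspace --- a density increment --- or $B$ is ``spread'', in which case dependent random choice propagates the correlation and forces enough additive structure to locate the increment subspace anyway. The genuinely hard points, all handled in \cite{KelleyMeka,BSold}, are (i) keeping the per-step codimension cost polylogarithmic in $1/\alpha$ rather than $\alpha^{-O(1)}$ --- this is precisely the improvement over Sanders' Theorem~\ref{sumset linear} --- and (ii) ensuring that $1/\gamma$ enters only polylogarithmically, and only through the per-step cost, never through the number $O(\LL(\alpha))$ of iterations.
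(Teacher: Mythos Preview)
Your proof is correct and follows essentially the same approach as the paper: both deduce the lemma by iterating the Kelley--Meka density increment (stated in the paper as Lemma~\ref{KM increment}) $O(\LL(\alpha))$ times, each step costing $\ll \LL(\alpha)^4\LL(\gamma)^{O(1)}$ in codimension. The paper merely writes ``To deduce Lemma~\ref{approximate bogo}, one iterates the following density increment result'' and leaves the details implicit; your proposal fleshes out the iteration carefully---including the translation trick in characteristic two, the verification that the output $V$ has the claimed property via $\beta_i\ge\alpha$ and $\mu(W_i)\le 1$, and the termination bound---while correctly treating the per-step increment itself as a cited black box.
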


\begin{remark}
    To prove Theorem~\ref{newSanders}, it would be satisfactory to replace `$D$' by `$A-A$'. However, the proof easily gives this stronger form, so we saw no reason not to mention it.
\end{remark}
\noindent The important property here is that the dependence on the codimension is polylogarithmatically-efficient with respect to both $1/\alpha$ and $1/\gamma$ (which should be optimal, up to the powers of logs).

Given Lemma~\ref{approximate bogo}, along with Proposition~\ref{complement has subspace} from before, we may quickly obtain Theorem~\ref{newSanders}.
\begin{proof}[Proof of Theorem~\ref{newSanders} assuming Lemma~\ref{approximate bogo}]
    Let $C_0$ be the implicit absolute constant from Lemma~\ref{approximate bogo}.

    Then suppose we are given a non-empty subset $A\subset G:= \F_2^n$, and write $\alpha := \mu(A)>0$. Now, let $C$ be some sufficiently large absolute constant. If $n< C\LL(\alpha)^5$, then the conclusion of Theorem~\ref{newSanders} is trivially satisfied by taking $V = \{0_G\}$ which has dimension $0 \ge \frac{1}{C}\left(\frac{n}{\LL(\alpha)^5}\right)^{1/2}-1$.

    Otherwise, by the largeness of $C$, we will have that $\frac{n}{2C_0\LL(\alpha)^5}>\frac{C}{2C_0}>1$. In which case
    we may take $\gamma\in (0,1)$ so that $C_0 \LL(\alpha)^5\LL(\gamma)^2 = n/2$, meaning 
    \[\log_2(1/\gamma) = \left(\frac{n}{2C_0\LL(\alpha)^5}\right)^{1/2}.\]Write $d:= \lfloor \left(\frac{n}{2C_0\LL(\alpha)^5}\right)^{1/2}\rfloor$, so that $\gamma \le 2^{-d}$.
    
    Applying Lemma~\ref{approximate bogo} (which holds with the constant $C_0$) to the set $A$ and the constant $\gamma$, we can find some subspace $V\le G$ with $\dim(V)\ge n/2$, such that defining
    \[D:= \{v\in V:\mu_A\circ \mu_A(v)\ge \frac{1}{2}\mu(V)^2\}\]to be the set of locally popular differences, we have $\mu_V(D)\ge 1-\gamma$. Noting that $D\subset A-A = A+A$ (recall that $A=-A$, as $G=\F_2^n$), it will suffice to show that $D$ contains a $d$-space (recalling $d\gg (n/\LL(\alpha)^5)^{1/2}$).
    
    We shall prove this using Lemma~\ref{complement has subspace}. Let $S:= V\setminus D$, so that $D = S^c$ (where the complement is taken with respect to $V$). Clearly, $\mu_V(S)\le \gamma \le 2^{-d}$; so we will be done if we can confirm that the trivial non-degeneracy assumptions in Lemma~\ref{complement has subspace} are also satisfied.
    
    First, note that $\mu_A\circ\mu_A(0_V) = 1/\alpha \ge 1$, thus $0_V\not \in S$, as desired. Also, without loss of generality, we may suppose $n\ge 4$. Whence, $d \le n/2\le \dim(V)$ (recalling $d\le n^{1/2}$). This establishes the two non-degeneracy assumptions, and thus the result.
\end{proof}

\section{Proof of Theorem~\ref{multimain}} \label{sect3}

A key ingredient of this section is the following density-increment result.

\begin{lemma} [{\cite[Proposition~9]{BSnew}}, in characteristic two]\label{BS increment}
    Let $V\cong \F_2^n$ be a vector space, and consider $A,C\subset V$ with $\mu(A) = \alpha,\mu(C) = \gamma$. Suppose that $|\langle \mu_A\circ \mu_A,\mu_C\rangle -1|>1/2$.

    Then there exists a coset $U = t+V'$ where $\cdim(V')\ll \LL(\alpha)^4\LL(\gamma)^2$ and $\mu_U(A)\ge (1+1/128)\alpha$.
\end{lemma}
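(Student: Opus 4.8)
The statement in question is Lemma~\ref{BS increment}, which is quoted as ``\cite[Proposition~9]{BSnew}, in characteristic two.'' Since this is an external result being imported, the honest plan is to explain how it follows from the cited work of Bloom and Sisask, rather than to reprove their analytic machinery from scratch. The overall shape of the argument is the standard Kelley--Meka-style density increment: starting from the hypothesis $|\langle \mu_A\circ\mu_A,\mu_C\rangle-1|>1/2$, one wants to locate a subspace $V'$ of bounded codimension on some coset of which $A$ has relative density boosted by a fixed multiplicative factor. The only difference between Lemma~\ref{KM increment} and Lemma~\ref{BS increment} is the size of that factor ($1+1/16$ versus $1+1/128$) and the precise shape of the codimension bound; both are of the same polylogarithmic quality $\cdim(V')\ll \LL(\alpha)^4\LL(\gamma)^2$.

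The plan is as follows. First I would record that the hypothesis $|\langle \mu_A\circ\mu_A,\mu_C\rangle-1|>1/2$ says precisely that $C$ witnesses a correlation defect of the convolution $\mu_A\circ\mu_A$ against the uniform measure; unfolding the inner product, $\langle \mu_A\circ\mu_A,\mu_C\rangle = \mu_A\circ\mu_A\circ\mu_C(0)$-type quantity, so the deviation from $1$ is an additive-energy-type discrepancy between $A$ and the ``structured'' set $C$. Second, I would invoke the core spectral/sifting lemma from \cite{BSnew} (their Proposition~9 and the lemmas feeding into it), which takes exactly such a correlation defect and produces a coset of a subspace of codimension $\ll \LL(\alpha)^4\LL(\gamma)^2$ on which $1_A$ has elevated average; the improvement in the constant from $1/16$ to $1/128$ comes from their sharpened treatment of the ``unbalancing'' and ``dependent random choice / sifting'' steps, which I would cite rather than redo. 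Third, translating their statement --- which is phrased over a general finite abelian group with Bohr sets --- into the characteristic-two setting: over $\F_2^n$, Bohr sets of bounded rank are literally cosets of subspaces of bounded codimension, so the passage is purely notational and the rank parameter becomes the codimension. Putting these together yields the coset $U = t+V'$ with the claimed two properties.

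The main obstacle, to the extent there is one, is not mathematical novelty but bookkeeping: one must check that the quantitative parameters in \cite[Proposition~9]{BSnew} genuinely yield the codimension bound $\cdim(V')\ll \LL(\alpha)^4\LL(\gamma)^2$ after specializing to $\F_2^n$, and that their density-increment factor, when the various $\varepsilon$'s in their proof are tuned to the regime dictated by our hypothesis ``deviation $>1/2$,'' can indeed be taken to be a fixed constant such as $1+1/128$ independent of $\alpha,\gamma,n$. Since the paper is using this lemma as a black box imported verbatim from \cite{BSnew} (exactly as it used Lemma~\ref{KM increment} from \cite{hunter}), the ``proof'' here is really the citation together with the remark that the characteristic-two specialization is immediate; no further argument beyond pointing to \cite[Proposition~9]{BSnew} is needed, and I would not attempt to reconstruct the Kelley--Meka--Bloom--Sisask analytic apparatus in this paper.
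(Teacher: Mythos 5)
Your proposal is correct and matches the paper's own treatment: the paper imports this statement verbatim as \cite[Proposition~9]{BSnew} specialized to characteristic two and offers no further proof, exactly as you describe. The observation that Bohr sets over $\F_2^n$ reduce to cosets of subspaces (with rank becoming codimension) is the only ``translation'' step needed, and you have identified it.
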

\begin{remark}
    In the original work of Kelley and Meka \cite{KelleyMeka},  a slightly weaker version of this result is proven where $\cdim(V)\ll \LL(\alpha)^4\LL(\gamma)^4$. The improved $\gamma$-dependence in the above lemma is based off a combinatorial trick which improves the quantitative performance of an intermediate step of the proof. In the following applications, the same proofs will continue to work with the previous increment bounds, only with slightly worse numbers. For example, we would instead get an upper bound of $R_{\F_2}(2,d)\le O(d^9)$.
\end{remark}



Our main result will be a variant of a ``sparsity-expansion dichotomy'' established by Chapman and Prendiville \cite[Theorem~2.1]{chapman}. 

\begin{proposition}\label{stronger dichotomy}
    Consider subsets $A_1,\dots, A_r\subset G:= \F_2^n$. For each $\alpha,\gamma\in (0,1)$, there exists some subspace $V\le G$ with $\cdim(V) \ll r \LL(\alpha)^5\LL(\gamma)^2$ such that, for each $i\in [r]$, one of the following holds:
    \begin{itemize}
        \item (Sparsity):
        \[\mu_V(A_i) < \alpha;\]
        \item (Expansion):
        \[\mu_V(D_i) > 1-\gamma,\]
        where
        \[D_i:= \{v\in V: \mu_{A_i}\circ \mu_{A_i}(v) \ge \frac{1}{2}\mu(V)^2\}.\]
    \end{itemize}
\end{proposition}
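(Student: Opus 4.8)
The plan is to prove this by a density-increment iteration, treating the $r$ sets simultaneously, exactly in the spirit of Chapman--Prendiville's sparsity-expansion dichotomy but keeping careful track of how the codimension cost accumulates across colors. First I would set up a potential/progress argument: I maintain a decreasing chain of subspaces $G = V_0 \geq V_1 \geq \cdots$, and at each stage I look at whether every color $i$ has already been ``resolved'' inside the current subspace $V$ -- meaning that relative to $V$, either $\mu_V(A_i) < \alpha$ (sparsity) or $\mu_V(D_i) > 1 - \gamma$ (expansion). If all colors are resolved, we stop and output $V$. Otherwise there is some color $i$ that is not resolved: we have $\mu_V(A_i) \geq \alpha$ but $\mu_V(D_i) \leq 1 - \gamma$. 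The non-expansion condition means the set $C := V \setminus D_i$ has $\mu_V(C) \geq \gamma$, and by the definition of $D_i$ every $v \in C$ has $\mu_{A_i} \circ \mu_{A_i}(v) < \frac12 \mu(V)^2$. Rescaling densities to $V$ (so $\mu_{A_i \cap V}$, etc.), this translates into $\langle \mu_{A_i \cap V} \circ \mu_{A_i \cap V}, \mu_{C} \rangle$ being bounded away from $1$ -- concretely the locally-popular-difference inner product is at most (roughly) $1/2$, so $|\langle \cdot, \cdot\rangle - 1| > 1/2$, which is exactly the hypothesis of the increment lemma (Lemma~\ref{BS increment}, or Lemma~\ref{KM increment}) applied inside $V \cong \F_2^{\dim V}$ with the color-$i$ set having relative density $\geq \alpha$ and $C$ having relative density $\geq \gamma$.

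The increment lemma then hands us a coset $U = t + V'$ with $V' \leq V$, $\operatorname{codim}_V(V') \ll \LL(\alpha)^4 \LL(\gamma)^2$, and $\mu_U(A_i) \geq (1 + c)\alpha_i$ where $\alpha_i$ is the current relative density of $A_i$ in $V$ and $c = 1/128$. Translating along the coset (which does not change densities of anything relevant, as convolution is translation-covariant), we replace $V$ by $V'$ -- or rather by a translate so that we may think of $A_i \cap U$ sitting inside $V'$ -- and the relative density of color $i$ has gone up by a factor $(1+c)$. The key bookkeeping point is that the relative density of any single color can increase by a factor $(1+c)$ at most $O(\log_{1+c}(1/\alpha)) = O(\LL(\alpha))$ times before it would exceed $1$; so color $i$ can be the ``unresolved'' color that triggers an increment at most $O(\LL(\alpha))$ times over the whole run. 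Summing over all $r$ colors, the total number of increment steps is $O(r \LL(\alpha))$, and each step costs codimension $O(\LL(\alpha)^4 \LL(\gamma)^2)$, giving a total codimension of $O(r \LL(\alpha)^5 \LL(\gamma)^2)$ -- which is precisely the claimed bound. The iteration must terminate because the potential $\sum_i (\text{relative density of } A_i)$ -- or, to be safe, a sum of ceilings of $\log_{1+c}$ of the densities -- strictly decreases with each step and is bounded, and when it cannot decrease further, all colors are resolved.

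There is one subtlety I would be careful about: when we pass from $V$ to the smaller subspace $V'$, a color $j \neq i$ that was previously resolved could in principle become unresolved again (sparsity is not obviously preserved under restriction, and neither is expansion). The clean way to handle this is to \emph{not} assume anything persists: simply re-examine all colors at the top of each loop iteration, and charge each increment step to whichever color triggered it. The density-increase-by-$(1+c)$ argument is monotone for the triggering color only \emph{within that color's own history}, but since densities live in $[\alpha, 1]$ and only ever increase when that color is the trigger (restricting to a coset where the relative density went up), the per-color step count is still $O(\LL(\alpha))$ regardless of what the other colors do. So the main obstacle, and the step requiring the most care, is exactly this accounting: verifying that the restriction-to-coset operation interacts correctly with the ``already resolved'' status and that the potential function one uses to bound the iteration length is genuinely monotone. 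Verifying the translation from ``$\mu_V(D_i) \leq 1-\gamma$'' to ``$|\langle \mu_{A_i \cap V} \circ \mu_{A_i \cap V}, \mu_C \rangle - 1| > 1/2$'' is a short computation using $\langle \mu_B \circ \mu_B, 1 \rangle = 1$ and splitting $1 = \mu_V(D_i) + \mu_V(C)$, weighting the $D_i$ part by its maximum possible contribution; I would do that at the start to make the reduction to Lemma~\ref{BS increment} clean.
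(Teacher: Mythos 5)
Your high-level plan is the same as the paper's (iterate the Bloom--Sisask increment, charge each step to the color that triggers it, stop when all colors are resolved, total codimension $O(r\,\LL(\alpha)^5\LL(\gamma)^2)$), and you correctly spot the one real subtlety: when you pass from $V$ to $V'$ because color $i$ triggered, the status of every other color $j$ can change. But your resolution of that subtlety does not actually work as stated. You claim that the density of each color ``only ever increases when that color is the trigger,'' so the per-color trigger count is $O(\LL(\alpha))$. For the \emph{relative} density $\mu_V(A_j)$ this is false: the increment lemma hands you a coset $U=t+V'$ that is good for $A_i$, and there is no reason whatsoever that $A_j$ (for $j\neq i$) is dense in that particular coset. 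After translating so that $A_i\cap U$ sits inside $V'$, the relative density $\mu_{V'}(A_j)$ can drop arbitrarily, and then $A_j$ can trigger again, and again, unboundedly --- your proposed potential $\sum_i \lceil\log_{1+c}\mu_V(A_i)\rceil$ (or the raw sum of densities) is simply not monotone, and the iteration need not halt.

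The fix, which is what the paper does, is to run the entire argument with the \emph{maximal} density over cosets, $\mu_V^*(S):=\max_{g}\,|(g+V)\cap S|/|V|$, in place of $\mu_V(S)$. The crucial observation (an easy averaging argument; this is Lemma~\ref{mono max} in the paper) is that $\mu_{V'}^*(S)\ge \mu_V^*(S)$ whenever $V'\le V$: passing to a smaller subspace can only \emph{raise} the best achievable density over cosets, for every color simultaneously. So the increment step makes $\mu^*(A_i)$ jump by a factor $(1+1/128)$ for the triggering color $i$, and $\mu^*(A_j)$ is non-decreasing for every other $j$ --- this gives a bona fide monotone potential, and the per-color trigger count is genuinely $O(\LL(\alpha))$. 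One also needs a companion increment lemma phrased in terms of $\mu^*$ (the paper's Proposition~\ref{maximal increment}): one picks the optimal translate $g$ so that $\mu_V^*(A_i)=\mu_V(A_i+g)$, sets $A':=(A_i+g)\cap V$, observes $D_i\supset D':=\{v\in V:\mu_{A'}\circ\mu_{A'}(v)\ge 1/2\}$ since $\mu_V(A')\ge \mu(V)\mu(A_i)$, applies Lemma~\ref{BS increment} to $A'$ and $C:=V\setminus D'$ inside $V$, and then unwinds the translates to conclude $\mu_{V'}^*(A_i)\ge(1+1/128)\mu_V^*(A_i)$. Finally, at the halting subspace $V_\tau$, since $\mu_{V_\tau}(A_i)\le \mu_{V_\tau}^*(A_i)$, failure of the $\mu^*$-sparsity condition implies the stated $\mu_V(A_i)<\alpha$ sparsity in the conclusion. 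Without introducing $\mu^*$ and its monotonicity under restriction, the bookkeeping in your write-up does not close.
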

With Proposition~\ref{stronger dichotomy} in hand, it will be easy to deduce our multi-color result. This can be done by setting $\alpha=\gamma:= \frac{1}{10r2^{r}}$, and using the same sorts of reductions as in the last section. We defer the formal details to the end of this section, and first handle the proof of Proposition \ref{stronger dichotomy}. As in \cite{chapman,sanders2}, let us start by first considering the following quantity. 

\begin{definition}
    Given a group $G$ and subsets $S,X\subset G$, denote by
    $$\mu_X^*(S):= \max_{g\in G}\left\{\frac{|(g+X)\cap S|}{|X|}\right\}$$
    the so-called \textit{maximal density} of $S$ within (translates of) $X$.
\end{definition}

Whenever the underlying group $G$ is the additive group $\mathbb{F}_{2}^{n}$, we will actually only consider this quantity when $X$ is a subspace of $\F_2^n$. In this particular situation, we record the following simple observation.
\begin{lemma}\label{mono max}
    Consider subspaces $V' \subset V \subset \mathbb{F}_{2}^{n}$. Then 
    \[\mu_{V'}^*(S)\ge \mu_V^*(S)\]for every $S\subset \mathbb{F}_{2}^{n}$.
    \begin{proof}
        This follows from an averaging argument. For convenience, let us write again $G:=\mathbb{F}_{2}^{n}$. Pick $g_0 \in G$ such that 
        $$\mu_V^*(S) =  \mu_V(g_0+S) = \mu_{V-g_0}(S).$$
        Since $V' \subset V$, we can pick $v_1,\dots,v_\ell\in V$ so that
        \[\bigsqcup_{i=1}^\ell v_i+V' = V.\]Therefore,
        \[\mu_{V-g_0}(S) = \E_{i\in [\ell]}[\mu_{V'+v_i-g_0}(S)].\]It follows that there must be some outcome $i$ where $\mu_{V'+v_i-g_0}(S)\ge \mu_{V-g_0}(S)$, so
        \[\mu_{V'}^*(S) \ge \mu_{V'+v_i-g_0}(S)\ge \mu_{V-g_0}(S) = \mu_V^*(S)\]as desired.
    \end{proof}
\end{lemma}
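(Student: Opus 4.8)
The statement is an averaging (pigeonhole) fact, and the plan is to prove it directly. First I would unwind the definition of $\mu_V^*(S)$: since $G = \mathbb{F}_2^n$ is finite, the maximum defining $\mu_V^*(S)$ is attained, so I can pick $g_0 \in G$ with
\[
\mu_V^*(S) = \mu_V(g_0 + S) = \mu_{V - g_0}(S),
\]
that is, $g_0 + V$ is a translate of $V$ on which $S$ is as dense as possible.

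Next I would exploit the hypothesis $V' \subseteq V$ by decomposing $V$ into cosets of $V'$: choose $v_1, \dots, v_\ell \in V$ (with $\ell = |V|/|V'|$) so that $V = \bigsqcup_{i=1}^{\ell}(v_i + V')$. Translating everything by $-g_0$ partitions the optimal coset $g_0 + V$ into the $\ell$ equal-sized $V'$-cosets $v_i + V' - g_0$, and since restricting a fixed coset to a sub-coset just averages the relative densities, we get
\[
\mu_V^*(S) = \mu_{V - g_0}(S) = \mathbb{E}_{i \in [\ell]}\big[\mu_{V' + v_i - g_0}(S)\big].
\]
By pigeonhole there is an index $i$ with $\mu_{V' + v_i - g_0}(S) \ge \mu_V^*(S)$. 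As $v_i - g_0$ is a perfectly legitimate translation vector, this copy of $V'$ competes in the maximum defining $\mu_{V'}^*(S)$, so $\mu_{V'}^*(S) \ge \mu_{V' + v_i - g_0}(S) \ge \mu_V^*(S)$, which is exactly the claim.

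I do not expect a genuine obstacle here: the argument is a single averaging step, and the monotonicity runs in the ``easy'' direction — passing to a smaller subspace only enlarges the family of translates one optimizes over, and inside the densest $V$-translate at least one $V'$-sub-coset must carry density at least the average. The only point to watch is that $\mu_X^*$ is defined as a maximum over \emph{all} translates $g + X$, so after translating the optimal copy of $V$ by $-g_0$ one is still free to range over all translates of $V'$ and no structural (subspace) constraint is lost. The same proof works verbatim for any pair of finite subgroups $V' \le V$ of an arbitrary finite abelian group, though only the $\mathbb{F}_2^n$ case is needed in what follows.
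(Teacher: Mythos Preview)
Your proposal is correct and follows essentially the same averaging argument as the paper: pick a translate of $V$ realizing $\mu_V^*(S)$, partition it into cosets of $V'$, and apply pigeonhole to find a $V'$-coset with at least the average density. The only additions are your remarks on generality and on why no structural constraint is lost, which are fine but not needed for the proof.
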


We can now prove our main iteration lemma.
\begin{proposition}\label{maximal increment}
    Consider subsets $A_1,\dots, A_r\subset G:= \F_2^n$, along with some subspace $V\le G$. Also fix some constants $\alpha,\gamma\in (0,1)$.
    
    Now suppose there was some $i\in [r]$ such that both of the following properties held:
    \begin{itemize}
        \item (failure of sparsity): \[\mu_V^*(A_i)\ge \alpha;\]
        \item (failure of expansion):
        \[\mu_V(D_i) \le 1-\gamma,\]where
        \[D_i := \{v\in V: \mu_{A_i}\circ \mu_{A_i}(v)\ge \frac{1}{2}\mu(V)^2\}.\]
        Then there exists $V'\le V$ with $\cdim(V')-\cdim(V)\ll \LL(\alpha)^4\LL(\gamma)^2$, so that
        \[\mu_{V'}^*(A_i) \ge (1+1/128)\mu_{V}^*(A_i)\](meanwhile $\mu_{V'}^*(A_j)\ge \mu_V^*(A_j)$ for all other $j\in [r]$).
    \end{itemize}
    \begin{proof}
        Throughout we write $A$ to denote $A_i$. Pick $g\in G$ such that $\mu_V^*(A) = \mu_V(A+g)$. Write $A' :=(A +g)\cap V$. By our first assumption, we have $\mu_V(A')\ge \alpha$. 
        
        Next, take 
        \[D' := \{v\in V: \mu_{A'}\circ \mu_{A'}(v)\ge \frac{1}{2}\}.\]Some renormalizing gives the pointwise bound $
        \mu_A\circ \mu_A(x)\ge \mu_{A'}\circ \mu_{A'}(x)\cdot \mu(V)^2$ for $x\in G$. Indeed, we have $\mu_A\circ\mu_A(x)\ge (\mu(A')/\mu(A))^2 \mu_{A'}\circ \mu_{A'}(x)$ (as $A'\subset A$), while $\mu(A')\ge \mu(V)\mu(A)$ follows from unravelling definitions\footnote{Note $\mu(A') = \mu(V)\mu_V(A')$. By definition of $A'$ $\mu(A')=\mu_V^*(A)$. Finally Lemma~\ref{mono max} gives $\mu_V^*(A)\ge \mu_G^*(A) = \mu(A)$.}. This gives the inclusion
        \[D_i = \{v\in V: \mu_A\circ \mu_A(v)\ge \frac{1}{2}\mu(V)^2\}\supset D',\]thus our second assumption implies that $\mu_V(D') \le 1-\gamma$.

        Take $C := V\setminus D'$. We have that $\mu(C) \ge \gamma$. Meanwhile, we have that $\langle \mu_{A'}\circ \mu_{A'},\mu_C\rangle \le 1/2$, whence we can apply Lemma~\ref{BS increment} to find a coset $U=t+V'$ with $V'\le V$  and $\cdim(V')-\cdim(V)\ll \LL(\alpha)^4\LL(\gamma)^2$, so that\[\mu_U(A') \ge (1+1/128)\mu_V(A') =(1+1/128)\mu_V^*(A) .\] Noting that $\mu_{V'}^*(A) \ge \mu_{V'}((g-t)+A) \ge (1+1/128)\mu_V^*(A)$ completes the claim. (For $j\in [r]\setminus\{i\}$, we get $\mu_{V'}^*(A_j)\ge \mu_V^*(A_j)$ by Lemma~\ref{mono max}.) 
    \end{proof}
\end{proposition}

Now our main proposition follows from running an increment argument.
\begin{proof}[Proof of Proposition~\ref{stronger dichotomy}] Fix $A_1,\dots,A_r \subset G$ and $\alpha,\gamma$ as in the prompt. 

Initialize with $V_0:= \F_2^n$. Then at time $t = 0,1,\dots,$ we take \[i_t:= \inf\left\{i\in [r]: \mu_{V_t}^*(A_i) \ge \alpha\text{ and }\mu_{V_t}(\{v\in V_t: \mu_{A_t}\circ \mu_{A_t}(v) \ge \frac{1}{2}\mu(V_t)^2\})\le 1-\gamma\right\};\]if $i_t = \infty$ (i.e., there are no such indices as above), then we halt at time $\tau := t$. Otherwise, we can invoke Proposition~\ref{maximal increment} to find some $V'\le V_t$ where $\cdim(V')-\cdim(V_t) \ll \LL(\alpha)^4\LL(\gamma)^2$, so that $\mu^*_{V'}(A_{i_t})\ge (1+1/128)\mu_{V_t}^*(A_{i_t})$. We define $V_{t+1} := V'$ and continue iterating the above (until we halt). 

Note that this process must halt after $\tau\ll r\LL(\alpha)$ steps, since for each $i\in [r]$, there can be at most $128\LL(\alpha)$ times $t$ where $i_t = i$. Indeed, if $\#(t'<t:i_{t'} = i) = m>0$, we get that \[1\ge \mu_{V_t}(A_i)\ge (1+1/128)^m\alpha \ge 2^{m/128} \alpha.\]

Meanwhile, we have that $\cdim(V_\tau) = \sum_{t=1}^\tau \cdim(V_t)-\cdim(V_{t-1}) \le \tau O(\LL(\alpha)^4\LL(\gamma)^2)$. Thus $\cdim(V_\tau) \ll r \LL(\alpha)^5\LL(\gamma)^2$. This gives the desired result since the sparsity-expansion dichotomy holds for $V_\tau$.
\end{proof}
\begin{remark}\label{justify approx bogo}
    Note that this proof actually locates a subspace $V\le G$ with $\cdim(V)\ll r\LL(\alpha)^5\LL(\gamma)^2$ so that for each $i\in [r]$, either $\mu_V^*(A_i)<\alpha$ or $\mu_V(D_i)>1-\gamma$. The $r=1$ case will imply Lemma~\ref{approximate bogo}, since if $\mu(A)\ge\alpha$, then $\mu_V^*(A)\ge\alpha$ for all subspaces $V$ (causing the latter condition to hold).
\end{remark}

\subsection{Proof of Theorem~\ref{multimain} given Proposition~\ref{stronger dichotomy}}\label{formal deduction}
Let $\alpha = \gamma := \frac{1}{10r2^{d}}$, and note that $L:= \LL(\alpha) = \LL(\gamma) = O(\log(r)+d)$. Let $C_0$ be the implicit constant from Proposition~\ref{stronger dichotomy}, and consider $n := 2C_0 r\LL(\alpha)^5\LL(\gamma)^2= 2C_0rL^7 = O(r(\log(r)+d)^7)$.

    Furthermore, consider $G:= \F_2^n$ any choice of color classes $A_1,\dots,A_r\subset G$ where each $A_i$ is sum-free (in particular, this implies $0\not\in A_i$ for each $i$). By Proposition~\ref{stronger dichotomy} (with our sets $A_1,\dots,A_r$ and parameters $\alpha,\gamma$), we can find some subspace $V\le G$ with $\dim(V)\ge n/2$, so that the desired sparsity-expansion dichotomy holds. Then, for $i\in [r]$, if $\mu_V(A_i)<\alpha$, we set $S_i := A_i$, and otherwise we take \[S_i := \{v\in V: \mu_{A_i}\circ \mu_{A_i}(v) <\frac{1}{2}\mu(V)^2\}.\]By the sparsity-expansion dichotomy, we have that $\mu_V(S_i) \le \frac{1}{10r2^d}$ for each $i\in [r]$; and deterministically we have that $0\not \in S_i$ (since $0\not \in A_i$ by assumption, and if $A_i$ is non-empty, then $\mu_{A_i}\circ \mu_{A_i}(0) = 1/\alpha\ge 1\ge \frac{1}{2}\mu(V)^2$).
    
    Thus, writing 
    \[S:= \bigcup_{i=1}^r S_i,\]we have that $0\not\in S$ and $\mu_V(S)\le \frac{1}{10 2^d}$ (by union bound). Finally, recalling that $\dim(V)\ge n/2\ge d$, we can invoke Lemma~\ref{complement has subspace} to deduce that $S^c:= V\setminus S$ contains a subspace $V'\le V$ with $\dim(V')\ge d$, which is disjoint from $S$. 
    
    For each $i\in [r]$, we have that $S \supset S_i$, which implies $V'$ is disjoint from $A_i$. Indeed, either we have that
    $S_i := A_i$ (which directly implies disjointness), or $S_i \supset V\setminus (A_i-A_i)$ (which implies $V'\subset A_i-A_i$, meaning $V'$ should be disjoint from $A_i$ by sum-freeness). So, it follows that $V'\subset G\setminus \bigcup_{i=1}^r A_i$, which implies the claim.

\section{Another application}

In this section, we would like to note that our methods can be extended further to provide an even stronger form of Theorem~\ref{multimain} (namely, Theorem~\ref{solutionfree strong bounds}), which gives similar results for arbitrary finite-fields. 

Given a prime $p \geq 2$, let us say that a set $S\subset \F_p^n$ is \textit{solution-free}, if there exists some choice $a,b\in \{1,\dots,p-1\}$ such that there is no $x,y,z\in S$ satisfying $a\cdot (x-y) = b\cdot z$. Note that every sum-free set $A\subset \F_p^n$ is a solution-free set (by taking $a=b=1$, and rearranging $(x-y)=z$ as $x=y+z$). 

This definition is motivated by ``homogeneous partition-regular equations'' (discussed below). Obtaining quantitative bounds for coloring problems related to these equations is an important topic in additive combinatorics, as such questions represent generalizations of the celebrated theorem of Schur (which states that for any finite coloring of $\Z_{>0}$, there must be a color class which is not sum-free). We will establish nearly sharp bounds in the finite-field setting for such problems.   

In the integer setting, it is known that for any fixed $a,b\in \Z_+$, and any $r\ge 1$, there exists some finite $N$ so that for any $r$-coloring $C:[N]\to [r]$, we can find some monochromatic triple $x,y,z$ such that $a(x-y) = bz$. Specializing to the case of $a=b=1$, this is the content of Schur's Theorem (which alternatively says that $[N]$ cannot be covered by $r$ sum-free sets (assuming $N$ is large enough with respect to $r$)). In \cite{cwalina}, Cwalina and Schoen studied the growth of $N= N_{a,b}(r)$ for any fixed $a,b\in \Z_+$ as $r\to \infty$. They demonstrated that \[\exp(\Omega_{a,b}(r)) \le  N_{a,b}(r) \le \exp(O_{a,b}(r^4\log(r)^4)).\]We defer the interested reader to \cite{cwalina} for further motivation on the study of such quantities; here, we will provide some nearly sharp bounds to a finite-field analogue of this problem.

\begin{theorem}\label{solutionfree strong bounds}
    For each prime $p$, there exists a constant $C_p$ such that the following holds:

    Consider solution-free subsets $A_1,\dots,A_r\subset G:= \F_p^n$. Furthermore, suppose that $n>C_pr(\log(r)+d)^7$ for some $d\ge 1$.
    
    Then there exists $V\le G$ with $\dim(V)\ge d$ so that $V$ is disjoint from $\bigcup_{i=1}^r A_i$.
\end{theorem}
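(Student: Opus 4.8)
The strategy is to mimic the proof of Theorem~\ref{multimain} almost verbatim, replacing the characteristic-two ingredients with their $\F_p$ analogues. The key structural facts we need are: (i) a Bose--Burton-type lemma over $\F_p^n$ (if $S \subset \F_p^n \setminus \{0\}$ has density below $p^{-d}$ then $S^c$ contains a $d$-dimensional subspace), which has the identical probabilistic proof since for $s \neq 0$ the probability that a uniformly random $d$-subspace contains $s$ is $(p^d-1)/(p^n-1)$; and (ii) a version of Proposition~\ref{stronger dichotomy} over $\F_p^n$. For (ii), I would reprove the whole chain Lemma~\ref{BS increment} $\Rightarrow$ Proposition~\ref{maximal increment} $\Rightarrow$ Proposition~\ref{stronger dichotomy} over $\F_p^n$; the Bloom--Sisask increment result (and the Kelley--Meka machinery behind it) is stated for general finite abelian groups, so Lemma~\ref{BS increment} holds with $\F_2^n$ replaced by $\F_p^n$ and ``coset of a subspace'' interpreted appropriately, and Lemma~\ref{mono max} on monotonicity of maximal density goes through unchanged for nested $\F_p$-subspaces. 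Thus one obtains: for any $A_1,\dots,A_r \subset \F_p^n$ and any $\alpha,\gamma \in (0,1)$, there is $V \le \F_p^n$ with $\cdim(V) \ll_p r\,\LL(\alpha)^5\LL(\gamma)^2$ such that for each $i$ either $\mu_V(A_i) < \alpha$ or $\mu_V(D_i) > 1-\gamma$, where $D_i = \{v \in V : \mu_{A_i}\circ\mu_{A_i}(v) \ge \tfrac12 \mu(V)^2\}$. (All implicit constants may now depend on $p$.)

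The one genuinely new point is handling \emph{solution-free} rather than merely sum-free sets, i.e.\ deducing from $V' \subset A_i - A_i$ (really from $V' \subset \{v : \mu_{A_i}\circ\mu_{A_i}(v) > 0\}$, which is $\supseteq$ the set of differences) that $V'$ is disjoint from $A_i$. If $A_i$ is solution-free with parameters $a, b \in \{1,\dots,p-1\}$, then there is no $x,y,z \in A_i$ with $a(x-y) = bz$. Suppose some $w \in V' \cap A_i$. Since $a b^{-1} w \in V'$ (subspaces are closed under scalar multiplication) and $V' \subset A_i - A_i$, we may write $ab^{-1} w = x - y$ with $x, y \in A_i$; then $a(x-y) = a b^{-1} a w$... — the cleaner route is: pick any nonzero $v \in V'$; then $b^{-1}(bv) = v$ and we want a monochromatic solution. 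Precisely, take $z := w \in A_i \cap V'$; since $a b^{-1} z \in V' \subset A_i - A_i$, write $ab^{-1} z = x - y$ with $x,y \in A_i$, so $a(x-y) = bz$ — a solution inside $A_i$, contradiction. Hence $V' \cap A_i = \emptyset$ as soon as $V' \subset A_i - A_i$ and $V'$ is nontrivial.

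With these in hand the deduction copies Section~\ref{formal deduction}: set $\alpha = \gamma := \tfrac{1}{10 r p^{d}}$ (so $L := \LL(\alpha) = \LL(\gamma) = O(\log r + d)$ with an implicit constant depending on $p$), and choose $n := 2 C_p r L^7$ where $C_p$ is the implicit constant from the $\F_p$-version of Proposition~\ref{stronger dichotomy}; this is $\Theta_p(r(\log r + d)^7)$, matching the hypothesis. Apply the dichotomy to get $V \le G$ with $\dim(V) \ge n/2 \ge d$. For each $i$, if $\mu_V(A_i) < \alpha$ set $S_i := A_i$; otherwise set $S_i := \{v \in V : \mu_{A_i}\circ\mu_{A_i}(v) < \tfrac12\mu(V)^2\} \supseteq V \setminus (A_i - A_i)$. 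In either case $\mu_V(S_i) \le \tfrac{1}{10 r p^d}$ and $0 \notin S_i$ (if $A_i \neq \emptyset$ then $\mu_{A_i}\circ\mu_{A_i}(0) = 1/\mu(A_i) \ge 1 \ge \tfrac12\mu(V)^2$). Then $S := \bigcup_i S_i$ satisfies $0 \notin S$ and $\mu_V(S) \le p^{-d}/10$, so the $\F_p$ Bose--Burton lemma yields $V' \le V$ with $\dim(V') \ge d$ and $V' \cap S = \emptyset$. For each $i$: either $S_i = A_i$, forcing $V' \cap A_i = \emptyset$ directly; or $V' \subset A_i - A_i$, and then the solution-free argument above gives $V' \cap A_i = \emptyset$. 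Hence $V' \subset G \setminus \bigcup_i A_i$, as required.

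**Main obstacle.** The only real work is verifying that the Kelley--Meka / Bloom--Sisask density-increment input (Lemma~\ref{BS increment}) and its consequences are available over $\F_p^n$ with the same polylogarithmic codimension bounds — this is essentially a matter of citing the relevant results in their stated generality for finite abelian groups (the references \cite{KelleyMeka,BSnew,hunter} all work at this level of generality, at the cost of $p$-dependent constants), rather than anything requiring a new idea. Everything else is a routine transcription, with the scalar-multiplication trick above as the one small new observation needed to upgrade ``sum-free'' to ``solution-free.''
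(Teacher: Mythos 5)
Your proposal is correct and follows essentially the same route as the paper: carry the Bose--Burton lemma, the maximal-density monotonicity, the Bloom--Sisask increment, and the sparsity-expansion dichotomy over to $\F_p^n$ (with $p$-dependent constants), set $\alpha=\gamma=\frac{1}{10rp^d}$, and use scalar closure of subspaces over $\F_p$ to upgrade ``sum-free'' to ``solution-free.'' One small computational slip in your ``cleaner route'': the scalar should be $ba^{-1}$, not $ab^{-1}$. With $z\in V'\cap A_i$ one takes $ba^{-1}z\in V'\subset A_i-A_i$, writes $ba^{-1}z=x-y$, and then $a(x-y)=a\cdot ba^{-1}z=bz$, a forbidden solution. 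As written, $ab^{-1}z=x-y$ gives $b(x-y)=az$ rather than $a(x-y)=bz$. The paper instead rescales the sets, setting $A_i':=a\cdot A_i$ and running the dichotomy on $A_1',\dots,A_r'$, so that $bz\in V'\subset A_i'-A_i'=a(A_i-A_i)$ directly produces the forbidden solution; after the sign fix, your version (scaling the element rather than the set) is equivalent, since $\mu_V(aA_i)=\mu_V(A_i)$ for any subspace $V$.
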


    This in particular tells us that $\F_p^n \setminus \{0\}$ cannot be covered by $\ll n^{1-c}$ sum-free sets for any absolute constant $c> 0$ (recalling that sum-free sets are solution-free). By considering using dyadic intervals to cover each of the $n$ coordinates, it is not hard to see that $O(\log(p)n)$ sum-free sets suffice to cover $\F_p^n$, thus this is nearly sharp. It is conjectured that $\Omega(\log(p) n)$ sets are also required, due to connections with conjectures regarding the asymptotic growth of the $r$-color Ramsey $R(3,\ldots,3)$. 
    
    Previously, the best-known additive proof\footnote{Due to connections with the $r$-color Ramsey number of triangles, it is known that you can't use at most $o( n/\log(n))$ sum-free sets to cover $\F_p^n$. But this proof is purely graph theoretical, and cannot be generalized to give bounds in the case of solution-free sets, or even to sets which all lack solutions to the equation $x-y = 2z$.} could only prove that $n^{1/3}$ sum-free sets are not sufficient to cover $\F_2^n\setminus \{0\}$ \cite[Section~2]{chapman}. For more general solution free-sets, the aforementioned upper bound of Cwalina-Schoen in the integer setting morally says that $n^{1/4-c}$ solution-free sets should not be enough to cover $\F_p^n\setminus \{0\}$ for any absolute constant $c>0$ \cite[Theorem~1.7]{cwalina}.

The starting point (for proving Theorem~\ref{solutionfree strong bounds}) is \cite[Proposition 9]{BSnew}.
\begin{lemma}[Bloom-Sisask]
    Fix a prime $p$. Let $V \cong \F_p^n$. Consider $A,C \subset V$ with $\mu(A)= \alpha,\mu(C) = \gamma$. Suppose that $|\langle \mu_A\circ \mu_A, \mu_C\rangle -1|>1/2$.

    Then there exists a coset $U= t+V'$ where $\cdim(V')\ll_p \LL(\alpha)^4\LL(\gamma)^2$ and $\mu_U(A)\ge (1+1/128)\alpha$.
\end{lemma}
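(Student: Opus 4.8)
This is \cite[Proposition~9]{BSnew} specialised to $G = \F_p^n$, up to cosmetic renormalisation, so the plan is to recall how the Kelley--Meka density-increment machinery produces such a statement; the argument is in fact cleaner over $\F_p^n$ than over cyclic groups, since the Bohr sets that appear are honest subspaces. First I would put the hypothesis into Fourier form. Both $\mu_A\circ\mu_A$ and $\mu_C$ have mean $1$, and for the convolution $\circ$ as defined one has $\widehat{\mu_A\circ\mu_A}(\xi)=|\widehat{\mu_A}(\xi)|^2\ge 0$, so Parseval gives $\langle \mu_A\circ\mu_A,\mu_C\rangle = 1 + \sum_{\xi\neq 0}|\widehat{\mu_A}(\xi)|^2\,\overline{\widehat{\mu_C}(\xi)}$. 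Thus the hypothesis $|\langle \mu_A\circ\mu_A,\mu_C\rangle-1|>1/2$ says exactly that the ``large spectrum'' of $A$ carries mass $>1/2$ when tested against $C$ (here one may assume $A,C\neq\emptyset$, and one records that $\mu_A\circ\mu_A\ge 0$, a fact used below).

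Next comes the \emph{unbalancing} step of Kelley--Meka. Because $\mu_A\circ\mu_A\ge 0$ (so its negative deviation from the mean is bounded below by $-1$), an averaged lower bound for $|\mu_A\circ\mu_A-1|$ against $\mu_C$ can, at the cost of replacing $C$ by a superlevel set $C'$ of a bounded convolution power ($k$-fold, $k=O(\LL(\gamma))$), be converted into a \emph{one-sided} excess estimate $\langle \mu_A\circ\mu_A,\mu_{C'}\rangle \ge 1+c$ for an absolute $c>0$, with $\LL(\mu(C')) = O(\LL(\gamma))$. Equivalently, if $a,a'$ are uniform in $A$ then $\P(a-a'\in C') \ge (1+c)\mu(C')$: $A$ has a genuine excess of differences inside the (somewhat structured) set $C'$.

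The core of the proof is then the \emph{sifting}/dependent-random-choice argument. The excess of $A$-differences in $C'$ lets one pass to a piece $A_1 = A\cap(A+s_1)$ for a well-chosen $s_1$ that retains density $\gtrsim\alpha$ while having $A_1-A_1$ much more concentrated on translates of $C'-C'$; iterating this boosting a bounded (in terms of $\LL(\alpha),\LL(\gamma)$) number of times yields a set of density $\gtrsim\alpha$ whose difference set is almost entirely covered by few translates of a single subspace, whereupon a Croot--Sisask almost-periodicity (Bogolyubov-type) step produces a subspace $V'$ with $\cdim(V')\ll_p \LL(\alpha)^4\LL(\gamma)^2$ together with a shift $t$ for which $\mu_{t+V'}(A)\ge (1+1/128)\alpha$; the explicit constant $1/128$ is what survives after tracking the increment through the unbalancing ($\propto c$) and sifting stages. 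The dependence on $p$ enters only through implied constants (e.g.\ in the $L^k$ estimates and in the passage from an approximate subgroup to a genuine subspace), which is why one can afford the $\ll_p$.

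The step I expect to be the main obstacle is precisely the sifting step: getting the codimension to grow only \emph{polynomially} in $\LL(\alpha)$ and $\LL(\gamma)$, with the stated exponents $4$ and $2$. The naive route --- passing to the subspace annihilating the large spectrum via Chang/Bogolyubov --- costs codimension exponential in $1/\alpha$, which is useless in our regime; it is exactly the Kelley--Meka sifting idea, in the sharpened form of \cite{BSnew} (improving the exponent over both \cite{KelleyMeka} and over Lemma~\ref{KM increment}), that forces the polynomial bound. In practice I would simply invoke their Proposition~9 after checking that it is stated for a general finite abelian group with no torsion obstruction, and specialise to $G=\F_p^n$.
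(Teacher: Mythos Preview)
Your proposal is essentially correct and in fact matches the paper's treatment exactly: the paper does not prove this lemma at all, but simply cites it as \cite[Proposition~9]{BSnew} and uses it as a black box. Your final sentence (``In practice I would simply invoke their Proposition~9 \dots\ and specialise to $G=\F_p^n$'') is precisely what the paper does; the preceding sketch of the Kelley--Meka unbalancing/sifting machinery is accurate background but goes beyond anything the paper itself supplies.
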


Repeating the arguments from the last section, it is not difficult to establish the following more general result.

\begin{proposition}\label{general dichotomy}
    Fix a prime $p$.

     Consider subsets $A_1,\dots, A_r\subset G:= \F_p^n$. For each $\alpha,\gamma\in (0,1)$, there exists some subspace $V\le G$ with $\cdim(V) \ll_p r \LL(\alpha)^5\LL(\gamma)^2$ such that, for each $i\in [r]$, one of the following holds:
    \begin{itemize}
        \item (Sparsity):
        \[\mu_V(A_i) < \alpha;\]
        \item (Expansion):
        \[\mu_V(D_i) > 1-\gamma,\]
        where
        \[D_i:= \{v\in V: \mu_{A_i}\circ \mu_{A_i}(v) \ge \frac{1}{2}\mu(V)^2\}.\]
    \end{itemize}
\end{proposition}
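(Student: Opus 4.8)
The plan is to mimic almost verbatim the proof of Proposition~\ref{stronger dichotomy}, replacing each invocation of the characteristic-two increment (Lemma~\ref{BS increment}) by the Bloom--Sisask increment over $\F_p^n$ stated just above, and tracking the dependence on $p$ through the implicit constants. First I would record the $\F_p$ analogue of the ``maximal density monotonicity'' Lemma~\ref{mono max}: for subspaces $V'\subset V\subset \F_p^n$ one still has $\mu_{V'}^*(S)\ge \mu_V^*(S)$ for every $S$, by the same averaging argument (decompose $V$ into cosets of $V'$, note the relative density on $V$ minus the optimal translate is an average of relative densities on cosets of $V'$, pick the best). Nothing about this uses $p=2$.

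Next I would prove the $\F_p$ version of the iteration step (Proposition~\ref{maximal increment}): given $A_1,\dots,A_r\subset V$ and an index $i$ failing both sparsity ($\mu_V^*(A_i)\ge\alpha$) and expansion ($\mu_V(D_i)\le 1-\gamma$), there is $V'\le V$ with $\cdim(V')-\cdim(V)\ll_p \LL(\alpha)^4\LL(\gamma)^2$ and $\mu_{V'}^*(A_i)\ge(1+1/128)\mu_V^*(A_i)$, while $\mu_{V'}^*(A_j)\ge\mu_V^*(A_j)$ for all $j$. The argument is identical: pick a translate $g$ achieving $\mu_V^*(A_i)=\mu_V(A_i+g)$, set $A':=(A_i+g)\cap V$, let $D':=\{v\in V:\mu_{A'}\circ\mu_{A'}(v)\ge\tfrac12\}$ and $C:=V\setminus D'$; the inclusion $D'\subset D_i$ (hence $\mu(C)\ge\gamma$) follows exactly as before from $\mu_V(A')\ge\mu(V)\mu(A_i)$ via the $\F_p$ monotonicity lemma, and then $\langle\mu_{A'}\circ\mu_{A'},\mu_C\rangle\le 1/2$ lets us apply the Bloom--Sisask increment to produce the coset $U=t+V'$ with $\mu_U(A')\ge(1+1/128)\mu_V(A')$; finally $\mu_{V'}^*(A_i)\ge\mu_{V'}((g-t)+A_i)\ge\mu_U(A')$. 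The only change from the characteristic-two case is that every $\ll$ becomes $\ll_p$.

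Then I would run the same energy/density-increment loop as in the proof of Proposition~\ref{stronger dichotomy}: start with $V_0=\F_p^n$; at each step, if some index $i_t$ fails both sparsity and expansion at $V_t$, apply the iteration step to get $V_{t+1}\le V_t$ with $\cdim$ increasing by $\ll_p\LL(\alpha)^4\LL(\gamma)^2$ and $\mu_{V_{t+1}}^*(A_{i_t})$ up by a factor $1+1/128$; halt when no such index exists. Since $\mu^*\le 1$ always and $\mu_{V_t}^*(A_i)\ge\alpha$ at the moment $i$ is first selected, a fixed index $i$ can be selected at most $128\LL(\alpha)$ times (the bound $2^{m/128}\alpha\le 1$), so the loop halts after $\tau\ll_p r\LL(\alpha)$ steps, giving $\cdim(V_\tau)\le\tau\cdot O_p(\LL(\alpha)^4\LL(\gamma)^2)\ll_p r\LL(\alpha)^5\LL(\gamma)^2$; at $V_\tau$ the sparsity--expansion dichotomy holds for every $i$, which is the claim. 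I do not anticipate a genuine obstacle here --- the excerpt itself flags this as ``repeating the arguments from the last section''; the only thing to be careful about is bookkeeping the $p$-dependence (the Bloom--Sisask constant is $\ll_p$, not absolute), and confirming that the two places where $\F_2$ was used implicitly (the monotonicity lemma and the harmless identity $A=-A$, which is not actually needed for this proposition since the statement is purely about $\mu_{A_i}\circ\mu_{A_i}$) go through without change.
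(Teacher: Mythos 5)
Your proposal is correct and matches the paper's intent exactly: the paper itself only says ``Repeating the arguments from the last section, it is not difficult to establish the following more general result'' and leaves the details to the reader, and what you write is precisely that repetition, with the characteristic-two increment swapped for the $\F_p$ Bloom--Sisask increment, the averaging proof of the monotonicity lemma unchanged (it never used $p=2$), and the $p$-dependence cleanly confined to the $\ll_p$ in the codimension bound. Your observation that the sum/difference distinction ($A=-A$) is irrelevant here, since the proposition only involves $\mu_{A_i}\circ\mu_{A_i}$, is also exactly the right thing to check.
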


We will leave the details of establishing Proposition~\ref{general dichotomy} to the interested reader. We now include the main ideas for how to prove Theorem~\ref{solutionfree strong bounds} (at a high level).

The rough idea is that if $A_i$ is a solution-free set, with no solutions to $a\cdot (x-y) = b\cdot z$, then defining $A_i' := a\cdot A_i$, we have that $0\not\in A_i'$, and for any subspace $V\le G$, $\mu_V(A_i') = \mu_V(A_i)$ and $V\subset A_i'-A_i'$ implies $V\cap A_i = \emptyset$. Meanwhile, similarly to Lemma~\ref{complement has subspace}, if $S\subset V\cong \F_p^n$ is a set with $0\not\in S$ and $\mu_V(S) < p^{-d}$ for some $d\le n$, then we have that there exists a subspace $V'\subset  V\setminus S$ with $\dim(V')\ge d$. One can now hope to argue in the same way as in Subsection~\ref{formal deduction} (now taking $\alpha = \gamma:= \frac{1}{10r p^d}$). We outline the argument below.

\begin{proof}[Proof sketch of Theorem~\ref{solutionfree strong bounds}]Consider $r,d\ge 1$, and set $\alpha= \gamma := \frac{1}{10 r p^d}$. Note that $\LL(\alpha) = O_p((\log(r)+d))$.

Assume that $n>  C_p r\LL(\alpha)^5\LL(\gamma)^2$ for some sufficiently large constant $C_p$ (which is allowed to depend on $p$); note that the lower bound on the right hand side is $O_p(r(\log(r)+d)^7)$.

Now consider any solution-free sets $A_1,\dots,A_r\subset \F_p^n$. For each $i$, we can find some $a,b\in \{1,\dots,p-1\}$, so that there is no $x,y,z\in A_i$ satisfying $a\cdot (x-y)=b\cdot z$; set $A_i' := a\cdot A_i$.

Using Proposition~\ref{general dichotomy}, we can find $V\le \F_p^n$ with $\dim(V)\ge n/2 \ge d$, and a partition $[r] = I_1\sqcup I_2$, so that:
\begin{itemize}
    \item $\mu_V(A_i') \le\alpha$ for $i\in I_1$;
    \item $\mu_V(V\setminus (A_i'-A_i'))\le \gamma$ for $i\in I_2$.
\end{itemize}For $i\in I_1$, set $S_i := A_i'$, and for $i\in I_2$, set $S_i := V\setminus (A_i'-A_i')$. As discussed above, since $A_i$ is solution-free, if $V'\le V$ is disjoint from $S_i$, then $V'$ is disjoint from $A_i$.

Now, taking $S:= \bigcup_{i=1}^r S_i$, we get that $\mu_V(S) \le \sum_{i=1}^r \mu_V(S_i) \le \frac{1}{10rp^d}<p^{-d}$. Furthermore, it is easy to show that $0\not\in S$. Thus (using the analog of Lemma~\ref{complement has subspace} which we discussed previously) there exists some $V'\le V$ with $\dim(V')\ge d$ which is disjoint from $S$, proving the theorem.
\end{proof}

\section{Concluding remarks and open problems}

In this paper, we proved that $R_{\mathbb{F}_{2}}(2,d) =O(d^{7})$, and more generally that
$$R_{\F_2}^{(r)}(2;d)=O_{r}(d^{7})$$
holds for all $r \geq 2$. We would like to begin this section by contrasting our multi-color result with the following result of Alon and R\"odl from \cite{AlonRodl}, generalizing the works of Ajtai-Komlo\'s-Szemer\'edi \cite{AKS} and Kim \cite{Kim} about the usual off-diagonal Ramsey number $R(3,m)$. 

\begin{theorem}[Alon-R\"odl]\label{AR}
Let $R^{(r)}(3;m)$ denote the $r$-color Ramsey number $R(3,\ldots,3;m)$. For every $r \geq 2$, we have that
$$R^{(r)}(3;m) = \Theta(m^{r}/ \operatorname{poly log} m).$$
\end{theorem}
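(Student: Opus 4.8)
The plan is to establish matching bounds, each up to a polylogarithmic factor, regarding $R^{(r)}(3;m)$ as the least $N$ such that every $r$-colouring of $E(K_N)$ with colour classes $G_1,\dots,G_r$ contains a triangle in some $G_i$ with $i\le r-1$ or a copy of $K_m$ in $G_r$. The base case $r=2$ is precisely $R(3,m)=\Theta(m^2/\log m)$: the upper bound is Ajtai--Koml\'os--Szemer\'edi and the lower bound is Kim's semi-random construction of a triangle-free graph on $\Omega(m^2/\log m)$ vertices with independence number below $m$. I would take these as the base of an induction on $r$.

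For the upper bound the key observation is local. In an $r$-colouring of $K_N$ with no forbidden pattern, for each colour $i\le r-1$ and each vertex $v$ the colour-$i$ neighbourhood $N_{G_i}(v)$ spans no colour-$i$ edge (otherwise one gets a colour-$i$ triangle through $v$), so the colouring restricted to $N_{G_i}(v)$ uses only the remaining $r-1$ colours, of which $r-2$ still forbid triangles and one still forbids $K_m$; hence $\deg_{G_i}(v) < R^{(r-1)}(3;m)$. Summing over $i\le r-1$, the graph $H:=G_1\cup\cdots\cup G_{r-1}$ has maximum degree below $(r-1)R^{(r-1)}(3;m)$, so a greedy argument gives an independent set of $H$ of size at least $N/\big((r-1)R^{(r-1)}(3;m)+1\big)$. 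An independent set in $H$ is a clique in $G_r$, hence has fewer than $m$ vertices, and therefore $N \le 2(r-1)\,m\,R^{(r-1)}(3;m)$. Iterating and plugging in the AKS base case yields $R^{(r)}(3;m)=O_r(m^r/\log m)$; Shearer's bound on the independence number of triangle-free (or union-of-triangle-free) graphs could be substituted for the greedy estimate to sharpen the polylog, but this is not needed.

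The substantial direction is the lower bound: for every $r$ one must exhibit an $r$-colouring of $K_N$ with $N=\Omega_r(m^r/\mathrm{poly}\log m)$ and no forbidden pattern. The operative constraint is a density trade-off: the classes $G_1,\dots,G_{r-1}$ must be triangle-free and \emph{as dense as possible} so that $G_r$ is sparse, yet $G_r$ must still have clique number below $m$. The naive multipartite choice (making $\overline{G_r}$ a complete $2^{r-1}$-partite graph, which decomposes into $r-1$ bipartite graphs) only yields $N\approx 2^r m$, and a uniformly random colouring with triangle deletion only yields $N\approx m^{3/2}$; both fall polynomially short. The fix is to use pseudorandom triangle-free graphs --- $(n,d,\lambda)$-graphs of the type in Alon's construction, or a Kim-style semi-random process run with $r-1$ colours simultaneously --- tuned to the exact density at which $\overline{G_r}$ still decomposes into $r-1$ triangle-free graphs while $G_r$ retains clique control; concretely one iterates the $r=2$ construction, balancing the parameters of the inner colouring against those of the outer pseudorandom triangle-free graph so as to gain a factor $\approx m/\mathrm{poly}\log m$ (not merely a constant) per step.

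The main obstacle is exactly this: getting the lower-bound construction to meet the $m^r$ upper bound, since the off-the-shelf random and algebraic constructions are polynomially too weak and one needs the pseudorandomness of \emph{all $r$ colour classes at once} to be controlled at the optimal density. Once the right construction is identified, verifying that it matches the upper bound up to polylogarithmic factors is a routine computation.
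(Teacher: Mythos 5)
The paper does not prove Theorem~\ref{AR}; it is cited as background from Alon and R\"odl (\emph{Combinatorica} 2005), so there is no in-paper argument to compare against. Judged on its own terms, your upper bound is correct and essentially the standard one: for each colour $i\le r-1$ and each vertex $v$, the colour-$i$ neighbourhood of $v$ spans no colour-$i$ edge, hence $\deg_{G_i}(v)<R^{(r-1)}(3;m)$; summing and taking a greedy independent set in $G_1\cup\cdots\cup G_{r-1}$ gives $R^{(r)}(3;m)\le m\bigl((r-1)R^{(r-1)}(3;m)+1\bigr)$, and iterating from the Ajtai--Koml\'os--Szemer\'edi base case yields $O_r(m^r/\log m)$. (Alon and R\"odl do sharpen the polylog by invoking Shearer in place of the greedy bound, as you note, but for the stated $\Theta$ with ``poly log'' the greedy version suffices.)

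The lower bound is where the theorem actually lives, and your proposal does not supply a proof of it. You correctly rule out the naive constructions and correctly identify that one needs the pseudorandomness of all $r-1$ triangle-free colour classes controlled simultaneously at near-optimal density, but the passage from there to ``iterate the $r=2$ construction, balancing parameters to gain a factor $\approx m/\mathrm{poly}\log m$ per step'' is a description of the goal, not an argument. In particular, you never produce the graph whose edges you are colouring, and you never show that the union $G_1\cup\cdots\cup G_{r-1}$ of $r-1$ triangle-free graphs on $N\approx m^r/\mathrm{poly}\log m$ vertices can be arranged to have independence number below $m$ (equivalently, that $G_r$ has clique number below $m$). This step is not a ``routine computation'': Alon and R\"odl take a fixed pseudorandom triangle-free $(n,d,\lambda)$-graph, superimpose $r-1$ copies under independent uniformly random vertex permutations, and then need a careful second-moment/union-bound computation, exploiting the spectral gap of the base graph, to bound the probability that any $m$-set is simultaneously missed by all $r-1$ copies. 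Your proposal flags that this is the obstacle and then stops; it does not overcome it, so the lower bound --- and with it the $\Theta$ --- is not established.
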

For convenience, $\operatorname{poly log} m$ here stands for an unspecified polynomial factor in $\log m$. For $r=2$, $R^{(2)}(3;m)=R(3,m)$, in which case Theorem \ref{AR} recovers (up to this polylogarithmic factor) the fact that $R(3,m) = \Theta(m^2 / \log m)$. For every $r \geq 3$, Theorem \ref{AR} implies for instance that 
$$\lim_{m \to \infty} \frac{R^{(r)}(3;m)}{R^{(r-1)}(3;m)} = \infty,$$
formerly a conjecture of Erd\H{o}s and S\'os \cite{ErdosSos} already in the case $r=3$. It would be tempting to make the analogous conjecture for $R_{\mathbb{F}_{2}}^{(r)}(2;d)$, namely that for every $r \geq 3$, 
\begin{equation} \label{ARno}
\lim_{d \to \infty} \frac{R_{\F_2}^{(r)}(2;d)}{R_{\F_2}^{(r-1)}(2;d)} = \infty
\end{equation}
must always hold. However, despite the successful analogies between $R_{\mathbb{F}_{2}}(2,d)$ and $R(3,m)$ thus far, we suspect \eqref{ARno} should not hold. Instead, we would like to make the following competing conjecture regarding the growth of $R_{\F_2}(2,d)$. 



\begin{conj} \label{linear}
    $$R_{\F_2}(2,d) = \Theta(d).$$
\end{conj}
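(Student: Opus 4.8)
\textbf{Proof proposal for Conjecture~\ref{linear}.}

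The plan is to attack the two directions separately, since they are of very different character. The lower bound $R_{\F_2}(2,d) = \Omega(d)$ should follow from an explicit construction: by Observation~\ref{sum-free}, it suffices to exhibit, for $n$ linear in $d$, a sum-free set $S \subset \F_2^n \setminus \{0_G\}$ such that $S+S$ contains no $d$-dimensional subspace. The natural candidate is a ``niveau''-type set adapted to characteristic two: take $S$ to be (a large piece of) the middle layer of the Hamming cube, i.e.\ vectors of weight in a narrow window around $n/2$. Such a set is sum-free whenever the window avoids summing into itself, and one computes that $S+S$ lives in the union of a bounded number of weight layers around $n/2$, which has density $O(1/\sqrt{n})$; by the Bose--Burton bound (Lemma~\ref{complement has subspace}) a set this sparse cannot contain a subspace of dimension more than $\tfrac12\log_2 n + O(1)$. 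Tuning so that $\tfrac12 \log_2 n \approx d$ would only give $n$ exponential in $d$, so this crude estimate is \emph{not} enough; the point will be to exploit that $S+S$ is not just sparse but concentrated on a \emph{union of cosets of a fixed structured set}, and to show directly that any subspace inside it would force a large Hamming-weight-restricted subcube, contradicting the isoperimetry of the cube (a Kleitman/Harper-type argument). Even getting a \emph{linear} lower bound $R_{\F_2}(2,d)\ge c d$ is nontrivial, and this is the first place I expect real work.

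For the upper bound $R_{\F_2}(2,d) = O(d)$ — which is the genuinely hard half — the strategy is to push the reduction of Observation~\ref{right reduction} together with a \emph{conjecturally optimal} Bogolyubov--Ruzsa statement. Recall that Theorem~\ref{main} proves $O(d^7)$ by feeding the polylogarithmic-codimension approximate-Bogolyubov lemma (Lemma~\ref{approximate bogo}) into the Bose--Burton reduction; the $d^7$ comes entirely from the $\LL(\alpha)^5 \LL(\gamma)^4$ codimension loss with $1/\alpha = 2^d$. To get $O(d)$ one would need: for every $A \subset \F_2^n$ with $\mu(A) \ge 2^{-d}$, the sumset $A+A$ contains a subspace of codimension $O(d)$ — equivalently, $f(n,\alpha) \ge n - O(\LL(\alpha))$. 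This is essentially the ``polynomial Freiman--Ruzsa / exact Bogolyubov'' philosophy in its strongest form, and with the recent resolution of the Polynomial Freiman--Ruzsa conjecture over $\F_2$ by Gowers--Green--Manners--Tao one has, for the first time, exact-Bogolyubov statements with codimension polynomial (indeed, with the right input, linear) in $\log(1/\alpha)$. The plan is therefore: (i) invoke the PFR-based exact Bogolyubov theorem to find $V \le \F_2^n$ with $\cdim(V) = O(\log(1/\alpha)) = O(d)$ and $V \subseteq A+A$ (or $V \subseteq A - A = A+A$); (ii) combine with the reduction to conclude $R_{\F_2}(2,d) = O(d)$. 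The main obstacle is that the currently available exact-Bogolyubov consequences of PFR give codimension $O(\log^{C}(1/\alpha))$ with $C > 1$ (the bottleneck being the polynomial, not linear, bounds one extracts), so one either needs the sharpest conjectured constants in PFR, or a direct argument that in the specific regime $1/\alpha$ comparable to a power of two one can do better — perhaps by iterating the Bloom--Sisask increment (Lemma~\ref{BS increment}) but arranging that the density doubles only $O(d)$ times \emph{and} each step costs $O(1)$ codimension, which would require an increment lemma with codimension cost $\LL(\alpha)^{O(1)}\LL(\gamma)^{o(1)}$ rather than $\LL(\gamma)^2$.

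A cleaner route to the upper bound, which I would pursue in parallel, bypasses additive combinatorics and attacks the colouring problem directly with a density-increment / ``dependent random choice'' argument tailored to subspaces, in analogy with the Ajtai--Koml\'os--Szemer\'edi approach to $R(3,m)$ (which the concluding section explicitly flags as the morally right analogy, see the discussion around Theorem~\ref{AR}). Concretely: given a red/blue colouring of $\binom{\F_2^n}{1}$ with no blue $2$-space and no red $d$-space, the blue $1$-spaces form a sum-free set $S$; pass to a random subspace $W$ of modest codimension on which the relative density of $S$ is controlled, observe that sum-freeness severely restricts the ``link'' structure, and show that after $O(d)$ such restrictions one is forced to land inside an all-red $d$-space. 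The obstacle here is that the subspace lattice is far less flexible than the subset lattice — there is no analogue of passing to a random \emph{vertex neighbourhood} — so making a semi-random or nibble argument work in this setting, with only a constant codimension loss per step, is precisely the crux and may well require a genuinely new idea; this is the step I expect to be the main obstacle overall, and its difficulty is, I think, the honest reason Conjecture~\ref{linear} is stated as a conjecture rather than a theorem.
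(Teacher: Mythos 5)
The statement you are addressing is Conjecture~\ref{linear}, which the paper explicitly leaves \emph{open}: there is no proof to compare against, and you are candid that your text is a plan rather than an argument. A few corrections to the plan are nonetheless worth recording. First, the lower bound $R_{\F_2}(2,d)=\Omega(d)$ is trivial and does not need the niveau-set/isoperimetry machinery you sketch. As the paper's footnote observes, the all-red colouring of $\F_2^{d-1}$ has no blue $2$-space and (for want of room) no red $d$-space, so $R_{\F_2}(2,d)\ge d$; a random colouring plus a union bound improves this to $(3/2-o(1))d$, and that is already the best known. Your plan of routing the lower bound through Observation~\ref{sum-free} and a sum-free niveau set is unnecessary, and also subtly misstates what is needed: to witness $R_{\F_2}(2,d)>n$ via a sum-free set $S$ one must arrange that $S^c$, not merely $S+S$, contains no $d$-space (the former implies the latter since $S+S\subseteq S^c$, but not conversely).

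On the upper bound, the central claim of your first route --- that a PFR-based ``exact Bogolyubov'' would yield $V\subseteq A+A$ with $\cdim(V)=O(\log(1/\alpha))$ --- is false, and the obstruction is the very niveau-set example you invoke in the other half of your note. That construction shows $f(n,\alpha)\le n-(C_\alpha-o(1))\sqrt n$, so for fixed $\alpha$ the codimension of every subspace contained in $A+A$ can be forced to grow like $\sqrt n$; no bound of the form $\cdim(V)\le g(\alpha)$ uniform in $n$ can hold for $A+A$. Polynomial-Bogolyubov consequences of PFR give subspaces of codimension $\log^{O(1)}(1/\alpha)$ inside $2A-2A$ (over $\F_2$, the four-fold sumset $A+A+A+A$), not inside $A+A$, and over $\F_2$ one cannot collapse the two. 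So the issue is not, as you frame it, that ``the currently available exponent $C$ is bigger than $1$''; the statement is simply wrong for $A+A$. The honest formulation of the missing ingredient --- which the paper states at the end of Section~5 --- is whether $f(Cd,2^{-d})\ge d$ for some absolute constant $C$, a question confined to the regime $n=O(\log(1/\alpha))$ in which the niveau-set obstruction is vacuous. Your closing suggestion of an increment lemma with cost $\LL(\alpha)^{O(1)}\LL(\gamma)^{o(1)}$ per step is a more honest target (since $\LL(\gamma)\approx d$ is the bottleneck in the paper's iteration), but there is currently no route from PFR to such a statement; and your second, AKS-style route is, as you say yourself, speculative.
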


A standard probabilistic construction can be used to see that\footnote{Technically speaking, the ``all-red coloring'' of $\F_2^{d-1}$ already proves that $R_{\F_2}(2,d)\ge d-1$. However, using a random coloring and a straightforward application of the Lov\'asz Local Lemma, one can show that $R_{\F_2}(2,d) \ge (2-o(1))d$ (thus we have $R_{\F_2}(2,d)-d \gg d$). We note that we are not aware of any better bounds than this basic probabilistic argument. In fact, we do not even know how to prove a better bound using (finitely many) additional colors (for example, it would already be interesting to show that $R^{(1000)}(2;d)\ge (2+c-o(1))d$ for some absolute constant $c>0$).} $R_{\F_2}(2,d) = \Omega(d)$, so in order to prove Conjecture \ref{linear} it would suffice to show that $R_{\F_2}(2,d) = O(d)$. Furthermore, note that 
$$R_{\F_2}^{(r)}(2;d) \leq R_{\F_2}^{(r-1)}(2;R_{\F_2}(2,d))$$ holds for every $r \geq 3$. Therefore, if $R_{\F_2}(2,d) = O(d)$, then $R_{\F_2}^{(r)}(2;d)=O_r(d)$ follows for every $r \geq 3$ as well. Moreover, it is clear that $R_{\F_2}^{(r)}(2,d) \geq R_{\F_2}^{(r-1)}(2,d)$ holds for every $r \geq 3$, thus Conjecture \ref{linear} implies that $R_{\F_2}^{(r)}(2;d) = \Theta_{r}(d)$ holds for every $r \geq 2$. 

That being said, we still believe that it would be interesting to distinguish quantitatively between, say, $R_{\F_2}(2,d)$ and $R_{\F_2}(2,2,d)$. For example, if \eqref{ARno} is too much to ask for, can one at least show that 
$$\lim_{d \to \infty}\left(R_{\F_2}^{(r)}(2;d) - R_{\F_2}^{(r-1)}(2;d)\right) = \infty$$
for every $r \geq 3$? 

In light of the additive combinatorics approach developed in this paper, an important problem that is directly relevant to the faith of Conjecture \ref{linear} seems to be the question of determining whether or not there exists some absolute constant $C>0$ such that $f(Cd,2^{-d})\ge d$ for all $d\ge 1$. 


There are also several natural open problems that one may pose about the general behavior of $R_{\mathbb{F}_{p}}(s,d)$ for other choices of $p,s \geq 2$ (as $d$ goes to $\infty$). We would like to share a few of our favorite ones. 

\begin{conj} \label{polynomial} For every $s \geq 2$, there exists some $C=C(s)>0$ such that
$$R_{\mathbb{F}_{2}}(s,d)=O(d^{C}).$$
\end{conj}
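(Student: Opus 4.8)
The natural first step is to extend the reduction behind Observations~\ref{sum-free}--\ref{right reduction} to arbitrary $s$: if $R_{\F_2}(s,d) > n$, then there is a set $S \subseteq \F_2^n \setminus \{0\}$ that contains no $s$-dimensional subspace, whose complement $S^c$ contains no $d$-dimensional subspace; in particular $\mu(S) \ge 2^{-d}$ by Lemma~\ref{complement has subspace}. So it suffices to show that any such $S$ must live in ambient dimension $n \le C(s)\, d^{C(s)}$.

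I would try to prove this by induction on $s$, the base case $s=2$ being exactly Theorem~\ref{main} through the reduction above. For the inductive step, fix a ``blue point'' $v \in S$ and pass to the quotient $\bar G := \F_2^n/\langle v\rangle \cong \F_2^{n-1}$, colouring a non-zero coset $x+\langle v\rangle$ \emph{blue} when both $x \in S$ and $x+v\in S$, and \emph{red} otherwise; write $\bar T_v$ for the blue set. A direct check shows that any blue $(s-1)$-dimensional subspace of $\bar G$ lifts, via its preimage, to an $s$-dimensional subspace of $\F_2^n$ all of whose non-zero vectors lie in $S$; since no such subspace exists, $\bar T_v$ contains no $(s-1)$-subspace. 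Hence, if one can choose $v$ so that the red set $\bar G\setminus \bar T_v$ contains no $d'$-subspace with $d' = \mathrm{poly}_s(d)$, then applying the inductive bound $R_{\F_2}(s-1,d')\le C(s-1)(d')^{C(s-1)}$ forces $n-1 < R_{\F_2}(s-1,d')$, and the induction closes with $C(s)$ polynomial in $C(s-1)$.

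Everything therefore hinges on the following statement, which is where I expect the real difficulty to be: \emph{for a dense $S \subseteq \F_2^n$ with no $s$-subspace and with $S^c$ free of $d$-subspaces, there is a $v\in S$ such that the complement of $\bar T_v = \{x+\langle v\rangle : x,x+v\in S\}$ contains no $\mathrm{poly}_s(d)$-dimensional subspace.} Density is not the obstruction --- $\mu(\bar T_v)$ is of order $\mu(S)^2 \ge 2^{-2d}$ on average over suitable $v$ --- but the complement of $\bar T_v$, consisting of the cosets where \emph{at least one} of $x,x+v$ falls in $S^c$, can be far larger and more structured than $S^c$; it contains a $d'$-subspace as soon as some $(d'+1)$-subspace $V\ni v$ meets $S$ in a set that is independent for the involution $x\mapsto x+v$ (for instance a near-hyperplane of $V$ avoiding the direction $v$), and this is not precluded by $S^c$ merely lacking large subspaces. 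There is also a genuine case split: if no $v\in S$ makes $\bar T_v$ usefully dense, then $S$ has very few Schur triples, so $S$ is close to sum-free and one must instead argue --- via a quantitatively efficient ``almost sum-free $\Rightarrow$ close to sum-free'' statement --- that $S$ already contains no $2$-subspace after deleting a negligible part, reducing to the $s=2$ case. My plan for the main statement would be to run a Kelley--Meka / Bloom--Sisask density increment (as in Section~\ref{sect3}) directly on the pair $(S,v)$: iterate passing to a bounded-codimension subspace on which either $\bar T_v$ has no dense involution-matched ``hole'' or $S$ itself gains a constant-factor density, terminating because $\mu(S)\le 1$ and paying only polynomially many dimensions overall.

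A cleaner but more demanding alternative is to skip the recursion and run a single density-increment argument producing an $s$-subspace directly inside a dense $S$ whose complement lacks a $d$-subspace --- the analogue of the Chapman--Prendiville sparsity--expansion dichotomy (Proposition~\ref{stronger dichotomy}) with the $(2^s-1)$-point configuration ``$s$-dimensional subspace'' in place of the Schur triple. This needs the approximate-Bogolyubov machinery extended to count affine-subspace patterns with polynomial dependence on $1/\mu(S)$; the obstruction is the same one in disguise, namely upgrading ``$A+A$ contains a large subspace'' to ``an iterated sumset of $A$ contains a large subspace carrying the full linear structure of an $s$-cube'' while preserving the polynomial bounds --- a statement not currently available but, in my view, the key to settling the conjecture.
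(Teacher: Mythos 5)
The statement in question is a \emph{conjecture}; the paper does not prove it either. What the paper offers after the conjecture is a proposed reduction via the quantity $f_k(n,\alpha)$, defined as the largest $d$ such that every $A\subset \F_2^n$ of density at least $\alpha$ admits a $d$-dimensional subspace $V$ with the property that for each $v_1,\dots,v_k\in V$ there is some $a\in A\setminus V$ with the affine cube $\{a+\sum_{i\in I}v_i: I\subset [k]\}$ contained in $A$. The paper shows (sketch only) that $f_{s-1}(n,2^{-d})\ge R_{\F_2}(s-1,d)$ would give $R_{\F_2}(s,d)\le n$, and thus reduces the conjecture to proving $f_k(Cd^C,2^{-d})\ge d$; this last bound is left open.

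Your first plan (fix a single $v\in S$, quotient by $\langle v\rangle$, and recurse) is a genuinely different route, and the gap you identify is the correct one: the blue quotient set $\bar T_v$ is an intersection-type condition, so its complement is union-type and can pick up large subspaces even when $S^c$ has none. It is worth seeing how the paper's $f_k$ formulation is built precisely to avoid this: it reverses the order of quantifiers. Instead of committing to one translate direction $v$ in advance and hoping the fixed quotient coloring behaves, the $f_k$ route first produces a single moderate-dimensional subspace $V$ on which, for \emph{every} choice of $s-1$ directions, a completing translate $a$ can be chosen \emph{afterwards}; one then applies the $(s-1)$-case to the restriction of the coloring to $V$, and upon finding a blue $(s-1)$-subspace one picks $a$ to splice it into a blue $s$-subspace. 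Your alternative plan (a sparsity--expansion dichotomy for the $(2^s-1)$-point ``$s$-subspace'' pattern) is morally close to this, though it asks for a bit more than necessary: the paper's proposal only requires an $(s-1)$-dimensional cube translate of $A$ to glue onto an inductively located blue $(s-1)$-subspace, not a full $s$-subspace pattern directly inside an iterated sumset. In either form, the missing ingredient --- an approximate-Bogolyubov-type statement for higher-dimensional cube patterns with polynomial dependence on $1/\mu(A)$ --- is exactly what you flag as the crux, and it is equally absent from the paper; so your diagnosis of where the difficulty lies agrees with the authors', while your specific quotienting mechanism is the one that the paper's $f_k$ formalism is designed to sidestep.
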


In other words, we believe that $R_{\mathbb{F}_{2}}(s,d)$ is polynomial in $d$ for every $s \geq 2$. We suggest a possible approach, which could potentially extend the ideas of this paper. Given $n,k\ge 1$ and $\alpha >0$, define $f_k(n,\alpha)$ to be the largest $d$, such that given any $A\subset G:= \F_2^n$ with $\mu(A)\ge \alpha$, we have that there exist some $V\le G$ with $\dim(V) \ge d$, such that for each $v_1,\dots,v_k\in V$, there exists $a\in A\setminus V$ such that the parallelepiped
\[\{a+\sum_{i\in I} v_i: I\subset [k]\}\]is contained entirely inside $A$. Note that $f_1(n,\alpha)$ is equal to the function $f(n,\alpha)$ we introduced previously.

Using arguments like before, one can show that if \[f_{s-1}(n,2^{-d})\ge R_{\F_2}(s-1,d),\] then we have that $R_{\F_2}(s,d)\le n$. Indeed, either the first color class will be ``sparse'' (density $<2^{-d}$), which implies a dimension $d$ subspace in the complement, or one should be able to pass to some subspace $V$ where every $(s-1)$-dimensional $V'$ subspace has some disjoint `coset'/parallelepiped which can extend $V'$ to a $s$-subspace (thus $V$ should be colored to not have a red $(s-1)$-subspace nor blue $d$-subspace, allowing us to iterate). Thus, if for each $k\ge 1$, one could show that $f_k(Cd^C,2^{-d})\ge d $ for some constant $C=C_k$, then $R_{\F_2}(s,d) \ll d^{O_s(1)}$ for every $s \geq 2$. Furthermore, if $f_k(C_k d,2^{-d}) \ge d$ holds for some constant $C_k$, the sketch above would readily imply that $R_{\F_2}(s,d)=O_s(d)$ for every $s \geq 2$. 

Last but not least, it would also be interesting to get reasonable bounds for the off-diagonal numbers in different finite fields. For example, it is already not clear how to extend the arguments of this paper to show that $R_{\F_3}(2,d)$ is polynomial in $d$. This presents new challenges, as now there is a difference between the set of $1$-dimensional subspaces of $\F_3^n$, and the set of non-zero points in this space (indeed, one can define a 2-coloring $C:\F_3^n\setminus \{0\}\to \{1,2\}$ so that $C(v)\neq C(2\cdot v)$ for all non-zero $v$, in which case every non-trivial subspace of $\F_3^n$ receives multiple colors). We currently do not have ideas for any additive combinatorial result that would readily give polynomial bounds in this setting.

\bigskip

\end{document}